\newtheorem{theorem}{Theorem}[section]
\newtheorem{lemma}[theorem]{Lemma}
\newtheorem{proposition}[theorem]{Proposition}
\newtheorem{corollary}[theorem]{Corollary}
\theoremstyle{definition}
\newtheorem{definition}[theorem]{Definition}
\newtheorem{example}[theorem]{Example}
\newtheorem{note}[theorem]{Note}
\theoremstyle{remark}
\newtheorem{remark}[theorem]{Remark}
\numberwithin{equation}{section}
\title{Weyl's theorem for  paranormal closed operators}
\author{Neeru Bala}
\address{Department of Mathematics, Indian Institute of Technology - Hyderabad, Kandi, Sangareddy, Telangana, India 502 285.}
\email{ma16resch11001@iith.ac.in}
\author{G. Ramesh}
\address{Department of Mathematics, Indian Institute of Technology - Hyderabad, Kandi, Sangareddy, Telangana, India 502 285.}
\email{rameshg@iith.ac.in}
	\subjclass[2010]{47A10, 47A53; 47B20}
	\keywords{Closed operator, Fredholm operator, minimum modulus, paranormal operator, Riesz projection, Weyl's theorem}
\date{\currenttime ;  \today}
\begin{document}

	\begin{abstract}
In this article we discuss  a few spectral properties of a paranormal  closed operator (not necessarily bounded) defined in a Hilbert space. This class contains closed symmetric operators. First we show that  the spectrum of such an operator is non empty. Next, we  give a characterization of closed range operators in terms of the spectrum.  Using these results we prove the Weyl's theorem: if $T$ is a densely defined closed, paranormal operator, then $\sigma(T)\setminus\omega(T)=\pi_{00}(T)$, where $\sigma(T), \omega(T)$ and $\pi_{00}(T)$ denote the spectrum, Weyl spectrum and the set of all isolated eigenvalues with finite multiplicities, respectively. Finally, we prove that  the Riesz projection $E_\lambda$ with respect to any isolated spectral value $\lambda$ of $T$ is self-adjoint and satisfies $R(E_\lambda)=N(T-\lambda I)=N(T-\lambda I)^*$.
	\end{abstract}

	\maketitle

\section{Introduction}
	One of the most important and well studied class in operator theory is the class of normal operators. The spectral theorem for normal operators assures the existance of non-trivial invariant subspace and also reveals the complete structure of the operator. Thus normal operators led to several generalizations, one among such generalizations is the class of paranormal operators.
	
The class of bounded paranormal operators was first studied by Istr$\mathrm{\breve{a}}$tescu \cite{IST1}, who named it as the \textit{class N}. Further, Furuta \cite{FUR} introduced the term paranormal operator.

	Bounded paranormal operators are studied by many authours, for eg \cite{ANDO,FUR,IST1,IST,UCH}. In particular, Ando \cite{ANDO} gave a characterization of bounded paranormal operators. Istr$\mathrm{\breve{a}}$tescu \cite{IST1} proved that the class of \textit{normaloid operators} is a generalization of paranormal operators. We have the following inclusion relation between some subclasses and generalized class of bounded paranormal operators.
	$$Normal\subseteq Hyponormal\subseteq Paranormal \subseteq Normaloid.$$
	 The above inclusion relations are proper. For more delatils, we refer to \cite{FUR,HAL}. The definition of bounded paranormal operators is extended to unbounded operators by Daniluk \cite{DEN}, where he has discussed about closability of unbounded paranormal operators.
	
	 In this article we are going to deal with densely defined closed paranormal operators in a Hilbert space $H$ and prove the following results.
	
	 Let $T$ be a densely defined closed paranormal operator in $H$. Then
	 \begin{enumerate}
	 	\item\label{3} Spectrum of $T$ is non-empty.
	 	\item\label{1} Every isolated spectral value of $T$ is an eigenvalue.
	 	\item\label{2} In addition, if $N(T)=N(T^*)$, then
	 	\begin{enumerate}
	 		\item Range of $T$ is closed if and only if $0$ is an isolated spectral value of $T$.
	 		\item The minimum modulus, $m(T)$ is equal to the distance of $0$ from spectrum of $T$.
	 	\end{enumerate}
 \item\label{4} $T$ satisfies the Weyl's Theorem \textit{i.e.} $\sigma(T)\setminus\omega(T)=\pi_{00}(T)$. Here $\omega(T)$ is the Weyl's spectrum and $\pi_{00}(T)$ consists of all isolated eigenvalues with finite multiplicity.
	\item\label{5} If $\lambda$ is an isolated spectral value of $T$, then the Riesz projection $E_\lambda$ with respect to $\lambda$ is self-adjoint and satisfies $R(E_\lambda)=N(T-\lambda I)=N(T-\lambda I)^*$. 	
	 \end{enumerate}
	
	  Results (\ref{1}) and (\ref{2}) are well known in the literature for self-adjoint  operators. For unbounded self-adjoint operators, a simple proof of these results is given by Kulkarni \textit{et. al.}\cite{KUL1}, without using the spectral theorem. For the bounded case, three elementary proofs are given in \cite{KUL}. Also (\ref{3}) is well known for self-adjoint and normal operators, refer \cite[Lemma 8.6, Page 102]{HEL} for more details.
	
	  Weyl's theorem and self-adjointness of Riesz projection with respect to isolated spectral value of an operator is studied for many different class of operators. For some non-normal operators (hyponormal and Toeplitz operators), this was established by Coburn \cite{COB}. Further Uchiyama \cite{UCH}, extended it to bounded paranormal operators using Ando's characterization \cite{ANDO} for paranormal operators. But, since Ando's characterization is not available for unbounded paranormal operators, the techniques of bounded operators does not work in our case. Hence  we try to prove (\ref{4}) and (\ref{5}), using a different approach.
	
 This article is divided into four sections. In second section we set up some notations and known results which we will be using throughout the article. In the third section we discuss some spectral properties of densely defined closed paranormal operators. In the last section we prove Weys'l theorem for paranormal operators.
	\section{Notations and preliminaries}
	In this article we consider complex Hilbert spaces, which will be denoted by $H, H_1, H_2$ etc. The inner product and the induced norm are denote by $\langle,\rangle$ and $\|.\|$, respectively. Given any two Hilbert spaces $H_1$ and $H_2$, their Cartesian product is defined as
	$$H_1\times H_2:=\{(h_1,h_2):h_1\in H_1,h_2\in H_2\}.$$
	It is a Hilbert space with the following inner product
	$$\langle(h_1,h_2),(k_1,k_2)\rangle:=\langle h_1,k_1\rangle_{H_1}+\langle h_2,k_2\rangle_{H_2}$$ for all $h_1,k_1\in H_1$ and $h_2,k_2\in H_2$.
	
	The space of all linear operators from $H_1$ to $H_2$ is denoted by $\mathcal{L}(H_1,H_2)$. We write $\mathcal{L}(H,H)=\mathcal{L}(H)$. For $T\in\mathcal{L}(H_1,H_2)$, the domain, null space and range space of $T$ are denoted by $D(T)$, $N(T)$ and $R(T)$, respectively. If $\overline{D(T)}=H_1$, then $T$ is called a \textit{densely defined operator}. The space $C(T):=D(T)\cap N(T)^{\perp}$ is called the carrier of $T$.
	
	An operator $T\in\mathcal{L}(H_1,H_2)$ is said to be \textit{bounded}, if there exist a positive real number $M$ such that $\|Tx\|\leq M\|x\|$ for all $x\in D(T)$. The space of all bounded linear operators from $H_1$ to $H_2$ is denoted by $\mathcal{B}(H_1,H_2)$. In particular, $\mathcal{B}(H):=\mathcal{B}(H,H)$.
	
	If $M$ is a  subspace of $H_1$, then $T|_{M}$ denotes the restriction of $T$ to $M$ and $S_{M}:=\{x\in M:\|x\|=1\}$ will denote the unit sphere in $M$. We denote the identity operator on $M$ by $I_M$. If $T\in\mathcal{L}(H)$ and $M$ is a closed subspace of $H$, then $M$ is said to be invariant under $T$, if for every $x\in D(T)\cap M$, $Tx$ is in $M$.
	
	For $T\in\mathcal{L}(H_1,H_2)$, \textit{Graph} of $T$ is defined by
	$$\mathcal{G}(T):=\{(x,Tx):x\in D(T)\}\subseteq H_1\times H_2.$$
	\begin{definition}
		A linear operator $T$ from $H_1$ to $H_2$ is called \textit{closed} if its graph $\mathcal{G}(T)$ is a closed subspace of the Hilbert space $H_1\times H_2$.
		
		Equivalently, $T$ is said to be closed, if for any sequence $(x_n)\subseteq D(T)$ with $x_n\rightarrow x$ and $Tx_n\rightarrow y$ implies $x\in D(T)$ and $Tx=y$.
	\end{definition}
    By the closed graph Theorem, a closed linear operator defined on the whole space is bounded. It follows that domain of any unbounded closed operator is a proper subspace of a Hilbert space. The class of all closed linear operators from $H_1$ to $H_2$ is denoted by $\mathcal{C}(H_1,H_2)$. We write $\mathcal{C}(H,H)=\mathcal{C}(H)$.

    For every densely defined operator $T\in\mathcal{L}(H_1,H_2)$, there exists a unique operator $T^*\in\mathcal{L}(H_2,H_1)$, which satisfies

    $$\langle Tx,y\rangle=\langle x,T^*y\rangle,\,\forall x\in D(T),\,y\in D(T^*),$$
    where
    $D(T^*)=\{y\in H_2:x\rightarrow\langle Tx,y\rangle\text{ is continuous on }D(T)\}.$ This operator $T^*$ is called the \textit{adjoint} of $T$. The denseness of domain is necessary and sufficient for the existence of the adjoint.

    If $S$ and $T$ are two closed operators, then $S$ is called an \textit{extension} of $T$ (or $T$ is a \textit{restriction} of $S$), if $D(T)\subseteq D(S)$ and $Sx=Tx$ for all $x\in D(T)$. This is often denoted as $T\subset S$. Consequently $S=T$ if and only if $D(S)=D(T)$ and $Sx=Tx$ for all $x\in D(S)=D(T)$.

     A densely defined operator $T\in\mathcal{C}(H)$ is said to be \textit{self-adjoint} if $T^*=T$ and \textit{normal} if $TT^*=T^*T$. If $T\in\mathcal{B}(H)$, then $T$ is \textit{hyponormal} if $TT^*\leq T^*T$. Equivalently, $T$ is hyponormal if $\|T^*x\|\leq\|Tx\|$ for all $x\in H$.

	Analogous to the bounded operator we can define the minimum modulus of closed operator.
	\begin{definition}\cite{GOL,SHKGR}
		Let $T\in\mathcal{C}(H_1,H_2)$. Then
		\begin{enumerate}
			\item the \textit{minimum modulus} of $T$ is defined by $m(T):=\inf\{\|Tx\|:x\in S_{D(T)}\}$.\\
			\item The \textit{reduced minimum modulus} of $T$ is defined by $\gamma(T):=\inf\{\|Tx\|:x\in S_{C(T)}\}$.
		\end{enumerate}
	\end{definition}
	By the definition, it is clear that $m(T)\leq\gamma(T)$.
	
	If $T\in\mathcal{C}(H_1,H_2)$ is a densely defined operator and $N(T)=\{0\}$, then the inverse operator, $T^{-1}$ is the linear operator from $H_2$ to $H_1$, with $D(T^{-1})=R(T)$ and $T^{-1}(Tx)=x$ for all $x\in D(T)$. In particular if $T\in \mathcal C(H)$ is densely defined and bijective, then by the closed graph theorem it follows that $T^{-1}\in \mathcal B(H)$. In addition if  $T\in\mathcal{C}(H)$ is normal then $T$ has a bounded inverse if and only if $m(T)>0.$
	
   Here we provide some results related to closed range operators that we will need later.
	\begin{theorem}\cite{BEN}\label{thm4}
		For a densely defined operator $T\in\mathcal{C}(H_1,H_2)$, the following are equivalent.
		\begin{enumerate}
			\item $R(T)$ is closed .
			\item $R(T^*)$ is closed.
			\item $\gamma(T)>0$.
			\item $T_0=T|_{C(T)}$ has a bounded inverse.
		\end{enumerate}
	\end{theorem}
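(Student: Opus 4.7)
The plan is to establish the equivalences along the route $(3) \Leftrightarrow (4)$, $(3) \Leftrightarrow (1)$, and finally $(1) \Leftrightarrow (2)$ via the closed range theorem. The key preliminary observation is that $T_0 := T|_{C(T)}$ is injective, since $N(T_0) = N(T) \cap N(T)^\perp = \{0\}$, and has $R(T_0) = R(T)$: every $x \in D(T)$ decomposes orthogonally as $x = P_{N(T)}x + (x - P_{N(T)}x)$ with $P_{N(T)}x \in N(T) \subseteq D(T)$, so that the second summand lies in $C(T)$ and maps to $Tx$ under $T_0$. This gives the orthogonal decomposition $D(T) = N(T) \oplus C(T)$ that drives the rest of the argument.

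The equivalence $(3) \Leftrightarrow (4)$ is essentially definitional: since $\gamma(T) = \inf\{\|T_0 x\| : x \in S_{C(T)}\}$, positivity of $\gamma(T)$ is precisely the statement that $T_0$ is bounded below, equivalently that $T_0^{-1} : R(T) \to C(T)$ is a bounded linear map. For $(3) \Rightarrow (1)$, given a sequence $y_n = Tx_n \to y$, I would replace each $x_n$ by $v_n := x_n - P_{N(T)}x_n \in C(T)$ (still satisfying $Tv_n = y_n$) and apply the bound $\|v_n - v_m\| \leq \gamma(T)^{-1}\|y_n - y_m\|$ to conclude that $(v_n)$ is Cauchy with some limit $v$; closedness of $T$ then gives $v \in D(T)$ and $y = Tv \in R(T)$.

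For $(1) \Rightarrow (3)$, the idea is first to verify that $T_0$ is itself a closed operator, which follows from closedness of $T$ together with closedness of $N(T)^\perp$; consequently $T_0^{-1} : R(T) \to H_1$ is closed as the inverse of an injective closed map. When $R(T)$ is closed, it is a Hilbert space in its own right and $T_0^{-1}$ is defined on all of $R(T)$, so the closed graph theorem forces $T_0^{-1}$ to be bounded, yielding $\gamma(T) = \|T_0^{-1}\|^{-1} > 0$.

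Finally, $(1) \Leftrightarrow (2)$ is Banach's closed range theorem, extended to densely defined closed operators; I would cite this rather than reprove it. The main obstacle is precisely this last equivalence: it requires the duality identifications $R(T) = N(T^*)^\perp$ and $R(T^*) = N(T)^\perp$ in the unbounded setting, which in turn rest on a careful treatment of the adjoint $T^*$ and of the graph decomposition $H_1 \times H_2 = \mathcal{G}(T) \oplus V\mathcal{G}(T^*)$, where $V(x,y) = (-y,x)$. By contrast, the equivalences involving $\gamma(T)$ and $T_0$ reduce cleanly to the orthogonal decomposition $D(T) = N(T) \oplus C(T)$ together with the closed graph theorem.
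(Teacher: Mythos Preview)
The paper does not prove this theorem at all: it is stated in the preliminaries section with a citation to \cite{BEN} and no proof environment follows. So there is no ``paper's own proof'' to compare against.

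Your argument is correct and standard. The decomposition $D(T)=N(T)\oplus C(T)$ (which works because $N(T)\subseteq D(T)$, so $P_{N(T)}$ preserves $D(T)$) cleanly reduces $(3)\Leftrightarrow(4)$ and $(3)\Leftrightarrow(1)$ to bounded-below/closed-graph arguments, and your verification that $T_0$ is closed is the right ingredient for $(1)\Rightarrow(3)$. Deferring $(1)\Leftrightarrow(2)$ to the closed range theorem for densely defined closed operators is also standard; a self-contained treatment would go through the graph decomposition you mention or, equivalently, through the identity $\gamma(T)=\gamma(T^*)$, but citing it is entirely appropriate here since the paper itself treats the whole statement as a quoted preliminary.
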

\begin{lemma}\cite[Lemma 3.3]{KUL2}\label{lemcarrier}
	Let $T\in\mathcal{C}(H_1,H_2)$ be densely defined. Then $\overline{C(T)}=N(T)^{\perp}.$
\end{lemma}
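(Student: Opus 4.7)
The plan is a direct two-way inclusion argument. The inclusion $\overline{C(T)} \subseteq N(T)^{\perp}$ is immediate, since $C(T) = D(T) \cap N(T)^{\perp} \subseteq N(T)^{\perp}$ and the right-hand side is a closed subspace (being the orthogonal complement of a set).

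For the reverse inclusion I would exploit the fact that $N(T)$ is a closed subspace of $H_1$: if $(x_n) \subseteq N(T)$ with $x_n \to x$, then $Tx_n = 0 \to 0$, and since $T$ is closed this forces $x \in D(T)$ with $Tx = 0$. Let $P$ denote the orthogonal projection onto $N(T)$. Given $y \in N(T)^{\perp}$, use that $T$ is densely defined to pick a sequence $(x_n) \subseteq D(T)$ with $x_n \to y$. Since $N(T) \subseteq D(T)$, the vectors $Px_n$ lie in $D(T)$, hence so do $y_n := (I-P)x_n = x_n - Px_n$. By construction $y_n \in N(T)^{\perp}$, so $y_n \in C(T)$. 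Continuity of $I - P$ together with $Py = 0$ gives $y_n \to (I-P)y = y$, proving $y \in \overline{C(T)}$.

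There is no real obstacle here beyond two structural observations that must be pointed out cleanly: (i) closedness of $T$ implies closedness of $N(T)$, which is what makes the orthogonal projection $P$ available; and (ii) $N(T) \subseteq D(T)$ is needed to ensure that subtracting the $N(T)$-component of an approximant keeps the approximant inside $D(T)$. Once both are noted, the proof reduces to the standard density-plus-projection trick, so I would keep the write-up to a short paragraph rather than belabor the estimate.
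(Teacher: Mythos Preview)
Your argument is correct and is the standard proof of this fact. Note, however, that the paper does not actually give its own proof of this lemma: it is quoted from \cite[Lemma~3.3]{KUL2} and stated without proof, so there is nothing to compare against here. Your write-up would serve as a complete proof; the two structural points you flag (closedness of $N(T)$ via closedness of $T$, and $N(T)\subseteq D(T)$ so that the projected approximants stay in $D(T)$) are exactly what is needed, and nothing is missing.
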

\begin{definition}
	Let $T\in\mathcal{C}(H)$. Then the \textit{resolvent set} of $T$ is defined by
	$$\rho(T)=\{\lambda\in\mathbb{C}:T-\lambda I\text{ is invertible and }(T-\lambda I)^{-1}\in\mathcal{B}(H)\}$$
	and $\sigma(T)=\mathbb{C}\setminus\rho(T)$ is called the \textit{spectrum} of $T$.
\end{definition}
If $T\in\mathcal{C}(H)$, then $\sigma(T)$ is a closed subset of $\mathbb{C}$. Moreover $\sigma(T)$ can be empty set or whole $\mathbb{C}$. Refer to \cite{REED} for more details. The spectrum of $T$ decomposes as the disjoint union of the \textit{point spectrum} $\sigma_p(T)$, \textit{continuous spectrum} $\sigma_c(T)$ and \textit{residual spectrum} $\sigma_r(T)$, where
\begin{align*}
\sigma_p(T)=&\{\lambda\in\mathbb{C}:T-\lambda I\text{ is not injective}\}\\
\sigma_r(T)=&\{\lambda\in\mathbb{C}:T-\lambda I\text{ is injective but }R(T-\lambda I)\text{ is not dense in }H\}\\
\sigma_c(T)=&\sigma(T)\setminus(\sigma_p(T)\cup\sigma_r(T)).
\end{align*}
The \textit{spectral radius} of $T\in\mathcal{B}(H)$ is defined by
$$r(T):=\sup\{|\lambda|:\lambda\in\sigma(T)\}.$$
An operator $T\in\mathcal{B}(H)$ is said to be \textit{normaloid}, if $r(T)=\|T\|.$
\begin{definition}\cite{SCE}
	A densely defined operator $T\in\mathcal{C}(H)$ is called \textit{Fredholm operator} if $R(T)$ is closed, $\dim(N(T))$ and $\dim(R(T)^{\perp})$ are finite.\\
	In this case $ind(T)=\dim(N(T))-\dim(R(T)^{\perp})$ is called the \textit{index} of $T$.
\end{definition}
\begin{remark}\label{rem3}
	If $T,K\in\mathcal{C}(H)$ are Fredholm and compact operator, respectively then $T+K$ is also Fredholm and $ind(T+K)=ind(T).$
\end{remark}
 For more details about Fredholm operators, refer \cite{SCE}.
\begin{definition}\cite{BAX}
	 Let $T\in\mathcal{C}(H)$. Then the \textit{Weyl's spectrum} of $T$ is defined by
	 $$\omega(T)=\{\lambda\in\mathbb{C}:T-\lambda I \text{ is not Fredholm operator of index 0}\}$$
	 	and $\pi_{00}(T)=\{\lambda\in\sigma_p(T): \lambda\text{ is isolated with }\dim(N(T-\lambda I))<\infty\}.$
	
\end{definition}

	Suppose $T\in\mathcal{C}(H)$ with $\sigma(T)=\sigma\cup\tau$, where $\sigma$ is contained in some bounded domain $\Delta$ such that $\bar{\Delta}\cap\tau=\emptyset$. Let $\Gamma$ be the boundary of $\Delta$, then
\begin{equation}\label{eqn3}
E_{\sigma}=\frac{1}{2\pi i}\int_{\Gamma}(zI-T)^{-1}dz,
\end{equation}
is called the \textit{Riesz projection} with respect to $\sigma$.
\begin{theorem}\cite[Theorem 2.1, Page 326]{GOH}\label{thmgoh}
	Suppose $T\in\mathcal{C}(H)$ with $\sigma(T)=\sigma\cup\tau$, where $\sigma$ is contained in some bounded domain $\Delta$ and $E_{\sigma}$ is the operator defined in Equation \ref{eqn3}. Then
	\begin{enumerate}
		\item $E_{\sigma}$ is a projection.
		\item The subspace $R(E_{\sigma})$ and $N(E_{\sigma})$ are invariant under $T$.
		\item The subspace $R(E_{\sigma})$ is contained in $D(T)$ and $T|_{R(E_{\sigma})}$ is bounded.
		\item $\sigma(T|_{R(E_{\sigma})})=\sigma\text{ and }\sigma(T|_{N(E_{\sigma})})=\tau$.
	\end{enumerate}
\end{theorem}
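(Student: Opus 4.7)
The plan is to drive everything from the Riesz functional calculus on the contour $\Gamma$, together with closedness of $T$ and the first resolvent identity, establishing the four conclusions in sequence.

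For part (1) I would introduce an auxiliary contour $\Gamma'$ lying in $\Delta\setminus\sigma$ strictly inside $\Gamma$, express $E_\sigma^2$ as a double integral, and apply the first resolvent identity
$$(wI-T)^{-1}(zI-T)^{-1}=\frac{(wI-T)^{-1}-(zI-T)^{-1}}{z-w};$$
the scalar inner integrals then evaluate by Cauchy's theorem (the $w$-integral of $1/(z-w)$ vanishes since $z\in\Gamma$ lies outside $\Gamma'$, while the $z$-integral picks up the pole $z=w$ inside $\Gamma$), collapsing $E_\sigma^2$ to $E_\sigma$. For parts (2) and (3) I would exploit $T(zI-T)^{-1}=-I+z(zI-T)^{-1}$: for any $x\in H$, both $(zI-T)^{-1}x\in D(T)$ and $T(zI-T)^{-1}x$ depend continuously on $z\in\Gamma$, so closedness of $T$ permits moving $T$ inside the integral, giving $E_\sigma x\in D(T)$ and
$$TE_\sigma=\frac{1}{2\pi i}\int_\Gamma z(zI-T)^{-1}\,dz,$$
which is manifestly bounded. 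Applying the same identity to $E_\sigma Tx$ for $x\in D(T)$ yields $E_\sigma T\subset TE_\sigma$, hence the $T$-invariance of both $R(E_\sigma)$ and $N(E_\sigma)=R(I-E_\sigma)$.

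Part (4) is the main obstacle. Write $T_1=T|_{R(E_\sigma)}$ and $T_2=T|_{N(E_\sigma)}$, and for any $\lambda\notin\Gamma$ define the bounded operator
$$R_\sigma(\lambda)=\frac{1}{2\pi i}\int_\Gamma\frac{(zI-T)^{-1}}{\lambda-z}\,dz.$$
Using $(\lambda I-T)(zI-T)^{-1}=(\lambda-z)(zI-T)^{-1}+I$ one computes $(\lambda I-T)R_\sigma(\lambda)=E_\sigma$ when $\lambda$ is outside $\Gamma$ and $(\lambda I-T)R_\sigma(\lambda)=-(I-E_\sigma)$ when $\lambda$ is inside $\Gamma$, the sign coming from the contour value of $1/(\lambda-z)$. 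A nested-contour calculation in the spirit of step (1) shows that $R_\sigma(\lambda)$ maps $H$ into $R(E_\sigma)$ in the first case and into $N(E_\sigma)$ in the second, and that $R_\sigma(\lambda)$ commutes with $T$ on $D(T)$; consequently $\pm R_\sigma(\lambda)$ supply two-sided bounded inverses on the respective invariant subspaces, giving $\sigma(T_1)\subseteq\sigma$ and $\sigma(T_2)\subseteq\tau$. On the other hand, for $\lambda\in\rho(T)$ the bounded resolvent $(\lambda I-T)^{-1}$ commutes with $E_\sigma$ and hence resolves $T_1$ and $T_2$ simultaneously, giving $\sigma(T_1)\cup\sigma(T_2)=\sigma(T)=\sigma\cup\tau$; together with the disjointness $\sigma\cap\tau=\emptyset$ enforced by $\overline{\Delta}\cap\tau=\emptyset$, the two inclusions become equalities.

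The step I expect to be most delicate is the construction of the inverse on $N(E_\sigma)$ for $\lambda\in\sigma$. Since $T$ may be unbounded, $\tau$ need not be bounded, so one cannot symmetrically enclose $\tau$ and reuse the argument; everything has to be pulled out of the single contour $\Gamma$ around $\sigma$, with the correct sign bookkeeping depending on whether $\lambda$ lies inside or outside $\Gamma$, and with a careful verification that $R_\sigma(\lambda)$ actually lands in the appropriate invariant subspace before invertibility of the restriction can be asserted.
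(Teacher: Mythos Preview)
The paper does not supply a proof of this theorem; it is quoted verbatim from Gohberg--Goldberg--Kaashoek \cite[Theorem~2.1, p.~326]{GOH} as a preliminary result and used as a black box throughout Sections~3 and~4. Consequently there is no ``paper's own proof'' to compare your proposal against.

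That said, your sketch is essentially the standard argument one finds in \cite{GOH} and similar references: the double-contour/resolvent-identity calculation for $E_\sigma^2=E_\sigma$, the identity $T(zI-T)^{-1}=-I+z(zI-T)^{-1}$ together with closedness of $T$ to get $R(E_\sigma)\subseteq D(T)$ and commutation, and the auxiliary operator $R_\sigma(\lambda)$ to locate the spectra of the restrictions. One small logical slip: from ``$(\lambda I-T)^{-1}$ commutes with $E_\sigma$ for $\lambda\in\rho(T)$'' you only obtain $\sigma(T_1)\cup\sigma(T_2)\subseteq\sigma(T)$, not equality. The reverse inclusion $\sigma(T)\subseteq\sigma(T_1)\cup\sigma(T_2)$ requires the separate observation that if $\lambda\in\rho(T_1)\cap\rho(T_2)$ then the two resolvents glue along the algebraic direct sum $H=R(E_\sigma)\oplus N(E_\sigma)$ (using $R(E_\sigma)\subseteq D(T)$ to handle the domain) to give a bounded inverse of $\lambda I-T$. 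Once both inclusions are in place, the disjointness $\sigma\cap\tau=\emptyset$ forces $\sigma(T_1)=\sigma$ and $\sigma(T_2)=\tau$ exactly as you indicate.
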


In particular, if $\lambda$ is an isolated point of $\sigma(T)$, then there exist a positive real number $r$ such that $\{z\in\mathbb{C}:|z-\lambda|\leq r\}\cap\sigma(T)=\{\lambda\}$. If we take $\Gamma$ to be the boundary of $\{z\in\mathbb{C}:|z-\lambda|\leq r\}$, we define the Riesz projection with respect to $\lambda$ as
\begin{equation}\label{eqn4}
E_{\lambda}=\frac{1}{2\pi i}\int_{\Gamma}(zI-T)^{-1}dz.
\end{equation}
For more details about Riesz projection, see \cite{GOH,LOR}.
\begin{definition}\cite[Definition 1.1]{DEN}
	An operator $T\in\mathcal{L}(H)$ is called paranormal operator if
	\begin{equation}\label{eqn2}
	\|Tx\|^2\leq\|T^2x\|\|x\|,\,\forall\,x\in D(T^2).
	\end{equation}
	Equivalently, $T$ is paranormal, if $\|Tx\|^2\leq\|T^2x\|,\,\forall\,x\in S_{D(T^2)}$.
\end{definition}
If $T\in\mathcal{B}(H)$, then Equation \ref{eqn2} holds for every $x\in H$. More details about bounded paranormal operators can be found in \cite{ANDO,FUR,IST,KUB,UCH}. Here we will summarize some well known results for paranormal operators.
\begin{lemma}\cite{KUB}\label{lem3}
	Let $T\in\mathcal{B}(H)$ be paranormal operator and $M$ be a closed subspace of $H$, which is invariant under $T$. Then $T|_{M}$ is also paranormal.
\end{lemma}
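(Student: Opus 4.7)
The plan is a direct verification from the definition of paranormal: since $T\in\mathcal{B}(H)$, the paranormal inequality
\[
\|Tx\|^2 \le \|T^2x\|\,\|x\|
\]
holds for every $x\in H$, not just for $x$ in some proper domain. So the entire task reduces to showing that, for $x\in M$, the left and right sides coincide with the corresponding quantities computed with $T|_M$ in place of $T$.

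The key step is the observation that invariance lets us iterate $T|_M$ without leaving $M$. Fix $x\in M$. Because $M$ is invariant under $T$ and $x\in M\subseteq H = D(T)$, we have $(T|_M)x = Tx \in M$. Applying the same argument to $Tx\in M$ gives $(T|_M)^2 x = T(Tx) = T^2 x$. Now take any unit vector $x\in S_M$. The paranormal inequality for $T$ at this $x$ reads $\|Tx\|^2 \le \|T^2 x\|$, and by the previous identifications this is precisely $\|(T|_M)x\|^2 \le \|(T|_M)^2 x\|$. Since $T|_M\in\mathcal{B}(M)$ (boundedness of $T$ transfers to the restriction), this is exactly the paranormal condition for $T|_M$.

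There is essentially no obstacle to overcome; the statement is almost tautological once one unwinds the definitions. The only subtlety worth flagging is that the argument uses boundedness of $T$ in a mild but essential way: it guarantees $D(T^2) = H$, so the paranormal inequality for $T$ applies to every element of $M$ without any domain restriction, and it ensures that $T|_M$ is itself defined on all of $M$ so that the inequality $\|(T|_M)x\|^2\le\|(T|_M)^2 x\|\,\|x\|$ makes sense for every $x\in M$.
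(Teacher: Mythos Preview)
Your proof is correct. The paper does not give its own proof of this lemma (it is quoted from \cite{KUB}), but the paper's proof of the unbounded analogue, Lemma~\ref{invalem}, proceeds along exactly the same line you take: use invariance to identify $(T|_M)^2$ with $T^2|_M$, and then the paranormal inequality for $T$ restricts verbatim to $M$.
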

\begin{theorem}\cite[Theorem 1]{IST}\label{thm6}
	If $T\in\mathcal{B}(H)$ is paranormal, then
	\begin{enumerate}
		\item $T$ is normaloid.
		\item $T^{-1}$ is paranormal, if $T$ is invertible.
		\item If $\sigma(T)$ lies on the unit circle, then $T$ is unitary operator.
	\end{enumerate}
\end{theorem}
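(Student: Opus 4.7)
The plan is to handle the three parts in sequence, using only the paranormal inequality together with Gelfand's spectral radius formula and the spectral mapping theorem for $z\mapsto 1/z$.

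For (1), the goal is to upgrade the single inequality $\|Tx\|^2\le\|T^2x\|\|x\|$ into the identity $\|T^n\|=\|T\|^n$ for every $n\ge 1$. Fix a unit vector $x$ and set $a_n=\|T^n x\|$; applying paranormality to the vector $T^{n-1}x$ gives $a_n^2\le a_{n-1}a_{n+1}$, so $(a_n)$ is log-convex and the ratios $a_{n+1}/a_n$ are non-decreasing. Telescoping this inequality yields $\|T^n x\|\ge\|Tx\|^n$ (the degenerate case $a_n=0$ is handled by a short descent argument using the same inequality). Passing to the supremum over unit vectors gives $\|T^n\|\ge\|T\|^n$, and since the reverse is automatic we get $\|T^n\|=\|T\|^n$. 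Gelfand's formula then yields $r(T)=\lim_n\|T^n\|^{1/n}=\|T\|$, which is normaloidness.

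For (2), assume $T$ is invertible. For any $y\in H$ write $y=T^2x$, so that $T^{-1}y=Tx$ and $T^{-2}y=x$; the paranormal inequality for $T$ then becomes $\|T^{-1}y\|^2\le\|T^{-2}y\|\|y\|$, which is precisely the paranormal inequality for $T^{-1}$.

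For (3), the hypothesis $\sigma(T)\subseteq\{|z|=1\}$ forces $0\notin\sigma(T)$, so $T$ is invertible with $r(T)=1$, and part (1) gives $\|T\|=1$. By (2) the operator $T^{-1}$ is paranormal, and by the spectral mapping theorem $\sigma(T^{-1})=\{1/\lambda:\lambda\in\sigma(T)\}$ also lies on the unit circle, so applying (1) again yields $\|T^{-1}\|=1$. For any $x\in H$ the chain
\[
\|x\|=\|T^{-1}Tx\|\le\|T^{-1}\|\|Tx\|=\|Tx\|\le\|T\|\|x\|=\|x\|
\]
forces $\|Tx\|=\|x\|$, so $T$ is an isometry; being also invertible, $T$ is unitary. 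The only mildly non-routine point in the whole argument is the log-convexity step in part (1); everything else is a direct substitution or a standard spectral fact.
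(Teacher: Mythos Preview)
Your proof is correct. Note, however, that the paper does not supply its own proof of this theorem: it is quoted in the preliminaries section with a citation to \cite{IST} and is used as a black box thereafter, so there is no in-paper argument to compare against. Your argument for (2) is in fact the bounded special case of the proof the authors give later for Proposition~\ref{propinverse}, and your arguments for (1) and (3) are the standard ones (log-convexity of $n\mapsto\|T^nx\|$ to get $\|T^n\|=\|T\|^n$, then Gelfand; and for (3), combining $\|T\|=r(T)=1$ with $\|T^{-1}\|=r(T^{-1})=1$ to force $T$ to be an invertible isometry). One tiny remark on (1): when you pass from $\|T^nx\|\ge\|Tx\|^n$ for each unit $x$ to $\|T^n\|\ge\|T\|^n$, you are implicitly using that $t\mapsto t^n$ is increasing so that the suprema commute with the $n$th power; this is routine but worth stating once.
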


	\section{Spectral properties}
	In this section, we will study some spectral properties of densely defined closed paranormal operators.
	
	A densely defined linear operator $T$ is called \textit{symmetric}, if $T\subseteq T^*$, that is $D(T)\subseteq D(T^*)$ and $Tx=T^*x$, for all $x\in D(T)$. 
	
	Next we show that every symmetric operator is paranormal.
	\begin{proposition}\label{symm}
	Let $T\in\mathcal{C}(H)$ be a symmetric operator. Then $T$ is paranormal.
	\end{proposition}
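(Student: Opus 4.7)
The plan is to verify the paranormal inequality directly from the definition of symmetry, using only the Cauchy--Schwarz inequality. This is a very short argument, essentially a one-line reduction, so the ``main obstacle'' is really just making sure the domain conditions line up so that the symmetry identity can be applied.

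First I would take an arbitrary $x\in D(T^2)$ and observe that by definition of $D(T^2)$ both $x\in D(T)$ and $Tx\in D(T)$. The symmetry condition $T\subseteq T^*$ is equivalent to saying $\langle Tu,v\rangle=\langle u,Tv\rangle$ for all $u,v\in D(T)$. Applying this with $u=x$ and $v=Tx$ (both of which lie in $D(T)$), I get
\begin{equation*}
\|Tx\|^2 = \langle Tx,Tx\rangle = \langle x,T(Tx)\rangle = \langle x,T^2 x\rangle.
\end{equation*}

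Next, by the Cauchy--Schwarz inequality applied to the right-hand side,
\begin{equation*}
\|Tx\|^2 = |\langle x,T^2 x\rangle|\le \|x\|\,\|T^2 x\|,
\end{equation*}
which is precisely the paranormal inequality of the definition. Since $x\in D(T^2)$ was arbitrary, $T$ is paranormal.

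There is no real obstacle here; the only subtle point is confirming that $Tx$ lies in $D(T)=D(T)\cap D(T^*)$ so that the symmetry identity is legitimate, and this is immediate from the definition $D(T^2)=\{x\in D(T):Tx\in D(T)\}$. Note that closedness of $T$ is not needed in the argument; symmetry alone suffices. I would therefore expect the authors' proof to be essentially identical to the three lines above.
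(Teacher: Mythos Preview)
Your proof is correct and is essentially identical to the paper's: the authors also take $x\in D(T^2)$, rewrite $\|Tx\|^2=\langle T^*Tx,x\rangle=\langle T^2x,x\rangle$ via $T\subseteq T^*$, and conclude by Cauchy--Schwarz. Your remark that closedness is not actually used is accurate.
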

	\begin{proof}
   For every $x\in D(T^2)$, we have
	\begin{align*}
	\|Tx\|^2=&\langle T^*Tx,x\rangle\\
	=&\langle T^2x,x\rangle\leq\|T^2x\|\|x\|.
	\end{align*}	
	This proves the result.
	\end{proof}
	 Next we generalize result (2) of Theorem \ref{thm6} to paranormal closed operators.
	\begin{proposition}\label{propinverse}
		Let $T\in\mathcal{C}(H)$ be a densely defined paranormal operator. If $0\notin\sigma(T)$ then $T^{-1}$ is paranormal.
	\end{proposition}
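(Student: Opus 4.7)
The plan is to derive the paranormality inequality for $T^{-1}$ from that of $T$ by a direct change of variables. Since $0 \notin \sigma(T)$, the operator $T^{-1}$ lies in $\mathcal{B}(H)$ by definition of the resolvent set, so $D(T^{-1}) = D((T^{-1})^2) = H$. This means the paranormality condition to verify is
\[
\|T^{-1}y\|^2 \leq \|T^{-2}y\|\,\|y\|\quad \text{for every } y\in H,
\]
with no domain subtleties on the $T^{-1}$ side.

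Next I would exploit the bijection $T^{-2}: H \to D(T^2)$. Given any $y \in H$, set $x := T^{-2}y \in D(T^2)$; then $y = T^2 x$, $T^{-2}y = x$, and $T^{-1}y = T^{-1}T^2 x = Tx$. Substituting into the desired inequality reduces it to
\[
\|Tx\|^2 \leq \|x\|\,\|T^2x\|,
\]
which is precisely the paranormality hypothesis on $T$, valid on $D(T^2)$. Since the substitution $y \mapsto T^{-2}y$ is a bijection of $H$ onto $D(T^2)$, every $y \in H$ is covered, completing the proof.

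There is essentially no analytic obstacle here; the entire content is bookkeeping of domains. The only point that requires care is confirming that $D((T^{-1})^2) = H$ (which follows immediately from $T^{-1} \in \mathcal{B}(H)$) and that the substitution $x = T^{-2}y$ lands in $D(T^2)$ so that the paranormality of $T$ can legitimately be applied at $x$. Both are clear from $0 \in \rho(T)$, so the proof should be just a couple of lines.
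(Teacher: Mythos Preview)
Your proposal is correct and follows essentially the same route as the paper: both set $x=T^{-2}y$ (equivalently $y=T^{2}x$) and read off $\|T^{-1}y\|^{2}=\|Tx\|^{2}\le \|T^{2}x\|\,\|x\|=\|y\|\,\|T^{-2}y\|$. The only cosmetic difference is that the paper first argues $R(T^{2})=H$ and $N(T^{2})=N(T)$ to justify the bijection $T^{2}:D(T^{2})\to H$, whereas you obtain this directly from $T^{-1}\in\mathcal{B}(H)$.
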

	\begin{proof}
		Since $T^{-1}$ exists, we get $R(T^2)=H$. As $T$ is paranormal, it is easy to observe that $N(T^2)=N(T)$, so $T^2$ is bijective and $(T^2)^{-1}$ exists. Also
		\begin{align*}
		D((T^{-1})^2)=&\{x\in D(T^{-1}):T^{-1}x\in D(T^{-1})\}\\
		=&\{x\in H:T^{-1}x\in H\}\\
		=&H=R(T^2).
		\end{align*}
		If $y\in H$, then there exist some $x\in D(T^2)$, such that $y=T^2x.$
		Now
		\begin{align*}
		\|T^{-1}y\|^2=\|Tx\|^2\leq&\|T^2x\|\|x\|\\
		=&\|y\|\|T^{-2}y\|.
		\end{align*}
		Hence $T^{-1}$ is paranormal.
	\end{proof}
It is well known that spectrum of a densely defined closed normal operator is non-empty. Here we will prove this result for  the  class of paranormal operators.
	\begin{theorem}\label{thm2}
		If $T\in\mathcal{C}(H)$ be a densely paranormal operator, then $\sigma(T)$ is non-empty.
	\end{theorem}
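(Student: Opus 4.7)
The plan is to argue by contradiction, reducing to a bounded paranormal operator whose spectrum is controlled by the spectrum of $T$, and then invoking the normaloid property from Theorem \ref{thm6}. Suppose, for contradiction, that $\sigma(T)=\emptyset$. In particular $0\in\rho(T)$, so by definition of the resolvent set $T^{-1}\in\mathcal{B}(H)$, and by Proposition \ref{propinverse} the operator $T^{-1}$ is paranormal. Hence $T^{-1}$ is a bounded paranormal operator on $H$, and Theorem \ref{thm6}(1) gives $\|T^{-1}\|=r(T^{-1})$.

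Next I would relate $\sigma(T^{-1})$ to $\sigma(T)$. Since $T$ is closed and densely defined with $0\in\rho(T)$, one has the standard spectral mapping relation
\[
\sigma(T^{-1})\setminus\{0\}=\{1/\lambda:\lambda\in\sigma(T)\}.
\]
With $\sigma(T)=\emptyset$, this forces $\sigma(T^{-1})\subseteq\{0\}$. But $T^{-1}$ is a bounded operator on a complex Hilbert space, hence has non-empty spectrum, so $\sigma(T^{-1})=\{0\}$ and $r(T^{-1})=0$. Combined with $\|T^{-1}\|=r(T^{-1})$, this yields $T^{-1}=0$.

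Finally, $T^{-1}=0$ contradicts the density of $D(T)$: for any $x\in D(T)$ one has $x=T^{-1}(Tx)=0$, so $D(T)=\{0\}$, which is impossible unless $H=\{0\}$.

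The only non-trivial step is the spectral mapping $\sigma(T^{-1})\setminus\{0\}=\{1/\lambda:\lambda\in\sigma(T)\}$ in the unbounded closed setting; I would verify it directly from the algebraic identity $T^{-1}-\mu^{-1}I=-\mu^{-1}T^{-1}(T-\mu I)$ on $D(T)$ (and the extension $(T-\mu I)^{-1}=-\mu(T^{-1}-\mu^{-1}I)^{-1}T^{-1}$ in the other direction), which gives the equivalence between $\mu\in\rho(T)\setminus\{0\}$ and $\mu^{-1}\in\rho(T^{-1})\setminus\{0\}$. This is the main potential obstacle, but everything else is an immediate consequence of the previously established facts.
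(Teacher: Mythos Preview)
Your proposal is correct and follows essentially the same route as the paper: assume $\sigma(T)=\emptyset$, pass to the bounded paranormal operator $T^{-1}$, show $\sigma(T^{-1})=\{0\}$ via the relation between $\rho(T)$ and $\rho(T^{-1})$, and conclude $T^{-1}=0$ from the normaloid property. The paper carries out the spectral-mapping step by writing down the explicit inverse $(T^{-1}-\lambda I)^{-1}=\lambda^{-1}T(T-\lambda^{-1}I)^{-1}$ for $\lambda\neq 0$, which is exactly the identity you propose to use.
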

	\begin{proof}
		On the contrary, assume that $\sigma(T)=\emptyset$. Thus $T$ is invertible and $T^{-1}\in\mathcal{B}(H)$.
		
		First we will show that $\sigma(T^{-1})=\{0\}$. For any complex number $\lambda\neq 0$, consider the operator $S=\lambda^{-1}T(T-\lambda^{-1}I)^{-1}$. Since $S$ can also be written as the sum of two bounded operators, $S=\lambda^{-1}(I+\lambda^{-1}(T-\lambda^{-1}I)^{-1})$, so $S$ is bounded. Clearly $S$ is the bounded inverse of $T^{-1}-\lambda I$. Thus $\sigma(T^{-1})\subseteq\{0\}$. As $T^{-1}\in\mathcal{B}(H)$, $\sigma(T^{-1})$ is non-empty, we conclude that $\sigma(T^{-1})=\{0\}$.
		
		By Proposition \ref{propinverse}, $T^{-1}$ is bounded paranormal operator, thus normaloid by Theorem \ref{thm6}. Hence $\|T^{-1}\|=0$, which implies $T^{-1}=0$, a contradiction. Hence $\sigma(T)$ is non-empty.
	\end{proof}
	Note that in Theorem \ref{thm2}, we only used the fact that $T^{-1}$ is normaloid. Thus we can make the following statement.
	\begin{proposition}
		If $T\in\mathcal{C}(H)$ be densely defined closed operator such that $T^{-1}$ is normaloid, then $\sigma(T)\ne\emptyset$.
	\end{proposition}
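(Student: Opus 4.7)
The plan is to copy the structure of the proof of Theorem \ref{thm2} verbatim, since the authors have flagged in the preceding remark that only the normaloidness of $T^{-1}$ was actually used there; the paranormality of $T$ entered solely through Proposition \ref{propinverse} to guarantee that $T^{-1}$ is paranormal and hence normaloid, and here that hypothesis is simply assumed outright.

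So I would argue by contradiction. Assume $\sigma(T)=\emptyset$. Then $T$ is invertible with $T^{-1}\in\mathcal{B}(H)$ (by the closed graph theorem applied to the densely defined bijective closed operator $T$, as noted in the preliminaries). The first step is to show $\sigma(T^{-1})=\{0\}$. For any nonzero $\lambda\in\mathbb{C}$, the point $\lambda^{-1}$ lies in $\rho(T)=\mathbb{C}$, so $(T-\lambda^{-1}I)^{-1}\in\mathcal{B}(H)$, and the operator
\[
S \;=\; \lambda^{-1}\bigl(I+\lambda^{-1}(T-\lambda^{-1}I)^{-1}\bigr)
\]
is bounded; a direct computation (identical to the one in Theorem \ref{thm2}) shows that $S$ is a two-sided bounded inverse of $T^{-1}-\lambda I$. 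Thus $\lambda\in\rho(T^{-1})$ for every $\lambda\neq 0$, so $\sigma(T^{-1})\subseteq\{0\}$. Since $T^{-1}\in\mathcal{B}(H)$ has nonempty spectrum, equality holds.

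The second and final step uses the standing hypothesis: $T^{-1}$ is normaloid, so
\[
\|T^{-1}\| \;=\; r(T^{-1}) \;=\; \sup\{|\mu|:\mu\in\sigma(T^{-1})\} \;=\; 0,
\]
forcing $T^{-1}=0$, which contradicts invertibility. Hence $\sigma(T)\neq\emptyset$.

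There is essentially no obstacle here, since the entire content of the preceding theorem was the derivation of normaloidness of $T^{-1}$ from paranormality of $T$; once that is assumed, the argument reduces to the two observations above. The only minor point to verify cleanly is the algebraic identity $(T^{-1}-\lambda I)S=S(T^{-1}-\lambda I)=I$ on the appropriate domains, which is the same bounded-operator manipulation carried out in the proof of Theorem \ref{thm2}.
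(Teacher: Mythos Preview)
Your proposal is correct and is precisely what the paper intends: the proposition is stated immediately after Theorem~\ref{thm2} with the remark that only the normaloidness of $T^{-1}$ was used there, and no separate proof is given. Your argument reproduces that proof verbatim with the paranormality step (Proposition~\ref{propinverse} $+$ Theorem~\ref{thm6}) replaced by the standing hypothesis, exactly as the paper suggests.
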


	As we know from Lemma \ref{lem3} that restriction of a bounded paranormal operator to an invariant subspace is paranormal. On the similar lines we can prove the following Lemma.
	\begin{lemma}\label{invalem}
		Let $T\in\mathcal{C}(H)$ be a paranormal operator. Suppose $M$ is a closed subspace of $H$ which is invariant under $T$, then $T|_{M}$ is paranormal.
	\end{lemma}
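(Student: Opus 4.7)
The plan is to mimic the bounded-case proof (Lemma \ref{lem3}) but carefully track domains, since in the unbounded setting the chief content of the argument is to check that $D\bigl((T|_M)^2\bigr)$ sits inside $D(T^2)$, after which the paranormal inequality for $T$ transfers verbatim.

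First I would unpack the definitions: the restriction $T|_M$ has domain $D(T|_M)=D(T)\cap M$ and acts as $T$ there. By the invariance hypothesis, for every $x\in D(T)\cap M$ we have $Tx\in M$, so the requirement that $(T|_M)x$ belongs to $D(T|_M)$ reduces to the single condition $Tx\in D(T)$. This yields the identification
\[
D\bigl((T|_M)^2\bigr)=\{x\in D(T)\cap M : Tx\in D(T)\}=D(T^2)\cap M.
\]
In particular every $x\in D\bigl((T|_M)^2\bigr)$ lies in $D(T^2)$, and on this set $(T|_M)x=Tx$ and $(T|_M)^2x=T^2x$.

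Next I would invoke the paranormality of $T$: for $x\in D(T^2)$,
\[
\|Tx\|^2\leq\|T^2x\|\,\|x\|.
\]
Applying this to any $x\in D\bigl((T|_M)^2\bigr)\subseteq D(T^2)$ and rewriting via the identifications above gives
\[
\|(T|_M)x\|^2=\|Tx\|^2\leq\|T^2x\|\,\|x\|=\|(T|_M)^2x\|\,\|x\|,
\]
which is exactly the paranormal inequality for $T|_M$.

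The only genuine subtlety — and therefore the main (modest) obstacle — is the domain verification; unlike the bounded case, one must confirm that iterating the restriction does not enlarge or shrink the effective domain in a way that would invalidate the transfer. The invariance hypothesis handles the ``stays in $M$'' half, and membership in $D(T^2)$ handles the ``stays in $D(T)$'' half. I would also note, although it is not strictly needed for the statement, that $T|_M$ is itself closed: if $(x_n)\subseteq D(T)\cap M$ with $x_n\to x$ and $Tx_n\to y$, then closedness of $T$ gives $x\in D(T)$ and $Tx=y$, while closedness of $M$ gives $x\in M$ and $y\in M$, so $T|_M\in\mathcal{C}(M)$. This keeps the lemma consistent with the ambient framework $\mathcal{C}(H)$ of closed paranormal operators used throughout the section.
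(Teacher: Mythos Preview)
Your proof is correct and follows essentially the same route as the paper's: both arguments verify the domain identity $D\bigl((T|_M)^2\bigr)=D(T^2)\cap M$ using the invariance hypothesis, and then transfer the paranormal inequality verbatim from $T$ to $T|_M$. Your added remark that $T|_M$ is closed is not in the paper's proof but is harmless and indeed consistent with the surrounding framework.
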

\begin{proof}
	As $M$ is invariant under $T$, we have
	\begin{align*}
	D(T^2|_M)=&D(T^2)\cap M\\
	=&\{x\in D(T):Tx\in D(T)\}\cap M\\
	=&\{x\in D(T)\cap M:Tx\in D(T)\cap M\},\; \text{since}\; T(D(T)\cap M)\subseteq M\\
	=&\{x\in D(T|_M):Tx\in D(T|_M)\}\\
	=&D((T|_M)^2).
	\end{align*}
	Thus we get $T^2|_M=(T|_M)^2$. Now the result follows from the following inequality;
	$$\|T|_Mx\|^2=\|Tx\|^2\leq\|T^2x\|=\|T^2|_Mx\|=\|(T|_M)^2x\|,\forall x\in S_{D((T|_M)^2)}.$$
\end{proof}
Next we will show that every isolated spectral value of paranormal operator is eigenvalue.
	\begin{lemma}\label{nullrange}
		Let $T\in\mathcal{C}(H)$ be densely defined and $\lambda$ be an isolated point of $\sigma(T)$. Then $N(T-\lambda I)\subseteq R(E_{\lambda})$, where $E_{\lambda}$ is the Riesz projection with respect to $\lambda$ defined in Equation \ref{eqn4}.
	\end{lemma}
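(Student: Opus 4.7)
The natural approach is to use Theorem \ref{thmgoh} to decompose $H$ along the Riesz projection $E_\lambda$ and argue that any eigenvector for $\lambda$ must lie entirely in $R(E_\lambda)$. Since $\lambda$ is isolated in $\sigma(T)$, we may write $\sigma(T) = \{\lambda\} \cup \tau$ with $\lambda \notin \overline{\tau}$, so all the conclusions of Theorem \ref{thmgoh} apply: $E_\lambda$ is a bounded projection, the ranges $R(E_\lambda)$ and $N(E_\lambda)$ are both invariant under $T$, $R(E_\lambda) \subseteq D(T)$, and $\sigma(T|_{N(E_\lambda)}) = \tau$.

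Given $x \in N(T - \lambda I)$, I decompose $x = x_1 + x_2$ with $x_1 = E_\lambda x \in R(E_\lambda)$ and $x_2 = (I - E_\lambda)x \in N(E_\lambda)$. The first key step is a domain check: by Theorem \ref{thmgoh}(3), $x_1 \in R(E_\lambda) \subseteq D(T)$, and since $x \in D(T)$, also $x_2 = x - x_1 \in D(T)$. Now I invoke invariance: $Tx_1 \in R(E_\lambda)$ and $Tx_2 \in N(E_\lambda)$, while trivially $\lambda x_1 \in R(E_\lambda)$ and $\lambda x_2 \in N(E_\lambda)$. The equation $Tx_1 + Tx_2 = \lambda x_1 + \lambda x_2$ then splits along the direct sum $H = R(E_\lambda) \oplus N(E_\lambda)$ to give $Tx_1 = \lambda x_1$ and, crucially, $Tx_2 = \lambda x_2$.

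The final step is to show $x_2 = 0$. Since $\sigma(T|_{N(E_\lambda)}) = \tau$ does not contain $\lambda$, the operator $T|_{N(E_\lambda)} - \lambda I|_{N(E_\lambda)}$ is invertible on $N(E_\lambda)$, so the equation $(T - \lambda I)x_2 = 0$ with $x_2 \in N(E_\lambda) \cap D(T)$ forces $x_2 = 0$. Thus $x = x_1 \in R(E_\lambda)$, proving $N(T - \lambda I) \subseteq R(E_\lambda)$.

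The only delicate point is really the domain bookkeeping in the splitting step; once we know $R(E_\lambda) \subseteq D(T)$ from Theorem \ref{thmgoh}(3), the algebraic splitting of the eigenvalue equation goes through cleanly, and the rest is an immediate consequence of the spectral separation $\sigma(T|_{N(E_\lambda)}) = \sigma(T) \setminus \{\lambda\}$. Notably this argument does not use paranormality; the lemma holds for any densely defined closed operator with $\lambda$ an isolated spectral value.
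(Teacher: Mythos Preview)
Your proof is correct but takes a different route from the paper's. The paper argues via an explicit resolvent integral: it sets
\[
S=\frac{1}{2\pi i}\int_{\Gamma}(z-\lambda)^{-1}(zI-T)^{-1}\,dz
\]
and checks the algebraic identity $(z-\lambda)^{-1}(zI-T)^{-1}(T-\lambda I)=-(z-\lambda)^{-1}I_{D(T)}+(zI-T)^{-1}I_{D(T)}$, which after integration gives $S(T-\lambda I)=-I_{D(T)}+E_\lambda|_{D(T)}$; applying this to $x\in N(T-\lambda I)$ yields $x=E_\lambda x$ directly. Your argument instead leans entirely on the structural conclusions of Theorem~\ref{thmgoh}: you split $x$ along $R(E_\lambda)\oplus N(E_\lambda)$, use invariance and the domain inclusion $R(E_\lambda)\subseteq D(T)$ to separate the eigenvalue equation, and then kill the $N(E_\lambda)$-component because $\lambda\notin\sigma(T|_{N(E_\lambda)})$. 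The paper's computation is more self-contained (it only needs the definition of $E_\lambda$, not the full decomposition theorem), while your approach is cleaner once Theorem~\ref{thmgoh} is in hand and makes the underlying mechanism---spectral separation on the two invariant pieces---more transparent. Both proofs are indifferent to paranormality, as you observe.
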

	\begin{proof}
		Let us consider
		\begin{equation*}
		S:=\frac{1}{2\pi i}\int_{\Gamma}(z-\lambda)^{-1}(zI-T)^{-1}dz,
		\end{equation*}
		where $\Gamma$ is the boundary of the disc $D=\{z\in\mathbb{C}:|z-\lambda|\leq r\}$ such that $D\cap\sigma(T)=\{\lambda\}.$
		For any $z\in\rho(T)$,
		\begin{equation}\label{eqn5}
		\begin{split}
		(z-\lambda)^{-1}(zI-T)^{-1}(T-\lambda I)=&(z-\lambda)^{-1}(zI-T)^{-1}[T-zI+zI-\lambda I]\\
		=&-(z-\lambda)^{-1}I_{D(T)}+(zI-T)^{-1}I_{D(T)}.
		\end{split}
		\end{equation}
		As $$-\int_{\Gamma}(z-\lambda )^{-1}I_{D(T)}dz+\int_{\Gamma}(z-T)^{-1}I_{D(T)}dz$$ is well defined, so is $S(T-\lambda I)$. Integrating Equation \ref{eqn5} on $\Gamma$, we get $S(T-\lambda I)=-I_{D(T)}+E_{\lambda}|_{D(T)}$.
		If we take any $x\in N(T-\lambda I)$, then $(-I_{D(T)}+E_{\lambda}|_{D(T)})x=0$. Consequently $x=E_{\lambda}x\in R(E_{\lambda})$. Hence $N(T-\lambda I)\subseteq R(E_{\lambda})$.
	\end{proof}
	\begin{proposition}\label{nulrangeprop}
		Let $T\in\mathcal{C}(H)$ be a densely defined paranormal operator. If $\lambda$ is an isolated point of $\sigma(T)$, then $N(T-\lambda I)=R(E_{\lambda})$.
	\end{proposition}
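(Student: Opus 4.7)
The plan is to upgrade Lemma \ref{nullrange} by establishing the reverse inclusion $R(E_\lambda) \subseteq N(T-\lambda I)$. Set $S := T|_{R(E_\lambda)}$. By Theorem \ref{thmgoh}, the subspace $R(E_\lambda)$ is contained in $D(T)$ and is invariant under $T$, the restriction $S$ is bounded, and $\sigma(S) = \{\lambda\}$. By Lemma \ref{invalem}, $S$ is paranormal. Thus the whole problem reduces to showing $S = \lambda\, I_{R(E_\lambda)}$.

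I would split the argument into two cases. If $\lambda = 0$, then $S$ is a bounded paranormal operator with $\sigma(S) = \{0\}$; such operators are normaloid by Theorem \ref{thm6}(1), so $\|S\| = r(S) = 0$ and hence $S = 0$. If $\lambda \neq 0$, I would first note the easy fact that any scalar multiple of a paranormal operator is paranormal (a one-line verification from $\|cTx\|^2 = |c|^2\|Tx\|^2 \le \|(cT)^2 x\|\|x\|$), so $\lambda^{-1}S$ is bounded paranormal. Its spectrum is $\{1\}$, which lies on the unit circle, and hence by Theorem \ref{thm6}(3) the operator $\lambda^{-1}S$ is unitary. A unitary operator is normal, and a bounded normal operator whose spectrum is the singleton $\{1\}$ must equal the identity by the spectral theorem; therefore $S = \lambda\, I_{R(E_\lambda)}$.

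In either case every $x \in R(E_\lambda)$ satisfies $Tx = Sx = \lambda x$, giving $R(E_\lambda) \subseteq N(T-\lambda I)$. Combining this with Lemma \ref{nullrange} yields the desired equality.

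The main obstacle is the observation that $S - \lambda I$ need not itself be paranormal (paranormality is not preserved under additive translation by scalars), which rules out the slickest possible route of simply asserting that a quasinilpotent paranormal operator must vanish. This forces the case split above, with the unitarity criterion in Theorem \ref{thm6}(3) doing the essential work whenever $\lambda \neq 0$; the rest of the argument consists of bookkeeping around Theorems \ref{thmgoh} and \ref{thm6} and the invariance results already established in this section.
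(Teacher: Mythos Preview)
Your proposal is correct and follows essentially the same route as the paper's proof: reduce to the bounded paranormal restriction $S=T|_{R(E_\lambda)}$ via Theorem~\ref{thmgoh} and Lemma~\ref{invalem}, then split on $\lambda=0$ (normaloid forces $S=0$) versus $\lambda\neq 0$ (Theorem~\ref{thm6}(3) makes $\lambda^{-1}S$ unitary, hence normal with singleton spectrum, hence the identity). The only cosmetic differences are that you explicitly verify scalar multiples of paranormal operators are paranormal before invoking Theorem~\ref{thm6}(3), and in the final step you cite the spectral theorem where the paper instead argues that $S-\lambda I$ is normal with spectrum $\{0\}$ and uses normaloidness once more.
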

	\begin{proof}
		By Lemma \ref{nullrange}, $N(T-\lambda I)\subseteq R(E_{\lambda})$. To complete the proof we have to show that $R(E_{\lambda})\subseteq N(T-\lambda I)$.
		
		By Theorem \ref{thmgoh} and Lemma \ref{invalem}, $T|_{R(E_{\lambda})}$ is bounded and paranormal. Then by Theorem \ref{thm6}, $T|_{R(E_{\lambda})}$ is normaloid.
		
		If $\lambda=0$, then $\sigma(T|_{R(E_{0})})=\{0\}$. This implies $\|T|_{R(E_0)}\|=0$, so we get $T|_{R(E_0)}=0$. Hence $R(E_0)\subseteq N(T).$
		
		Next if $\lambda\neq 0$, then $\sigma({\lambda}^{-1}T|_{R(E_{\lambda})})=\{1\}$. By Theorem \ref{thm6}, it follows that ${\lambda}^{-1}T|_{R(E_{\lambda})}$ is unitary. Thus $T|_{R(E_{\lambda})}-\lambda I_{R(E_{\lambda})}$ is normal and $\sigma(T|_{R(E_{\lambda})}-\lambda I_{R(E_{\lambda})})=\{0\}$. Since every normal operator is normaloid, we conclude that $T|_{R(E_{\lambda})}-\lambda I_{R(E_{\lambda})}=0$. Hence $R(E_{\lambda})\subseteq N(T-\lambda I)$.
	\end{proof}
	\begin{note}
		Proposition \ref{nulrangeprop} is proved for bounded paranormal operators by Uchiyama \cite{UCH}.
	\end{note}
\begin{corollary}\label{coroRE}
	Let $T$ be as defined in Proposition \ref{nulrangeprop} and $\lambda$ be an isolated point of $\sigma(T)$. Then $N(E_\lambda)=R(T-\lambda I).$
\end{corollary}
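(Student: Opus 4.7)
The plan is to prove the two inclusions separately, relying on (i) the decomposition $H = R(E_\lambda)\oplus N(E_\lambda)$ supplied by the bounded projection $E_\lambda$, (ii) Proposition \ref{nulrangeprop} which identifies $R(E_\lambda)$ with $N(T-\lambda I)$, and (iii) the invariance and spectral splitting coming from Theorem \ref{thmgoh}.

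For the inclusion $R(T-\lambda I)\subseteq N(E_\lambda)$, I would take an arbitrary $x\in D(T)$. By Theorem \ref{thmgoh}(3), $R(E_\lambda)\subseteq D(T)$, so $E_\lambda x\in D(T)$ and consequently $(I-E_\lambda)x\in D(T)\cap N(E_\lambda)$. Proposition \ref{nulrangeprop} gives $E_\lambda x\in R(E_\lambda)=N(T-\lambda I)$, whence the computation
\begin{equation*}
(T-\lambda I)x = (T-\lambda I)E_\lambda x+(T-\lambda I)(I-E_\lambda)x = (T-\lambda I)(I-E_\lambda)x,
\end{equation*}
combined with the $T$-invariance of $N(E_\lambda)$ from Theorem \ref{thmgoh}(2), places the right-hand side in $N(E_\lambda)$.

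For the reverse inclusion $N(E_\lambda)\subseteq R(T-\lambda I)$, I would invoke Theorem \ref{thmgoh}(4): since $\lambda$ is an isolated point of $\sigma(T)$, one has $\sigma(T|_{N(E_\lambda)})=\sigma(T)\setminus\{\lambda\}$, so $\lambda\in\rho(T|_{N(E_\lambda)})$. Thus $T|_{N(E_\lambda)}-\lambda I_{N(E_\lambda)}$ is a bijection of $D(T)\cap N(E_\lambda)$ onto $N(E_\lambda)$, and any $y\in N(E_\lambda)$ can therefore be written as $(T-\lambda I)x$ for some $x\in D(T)\cap N(E_\lambda)$, which places $y$ in $R(T-\lambda I)$.

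There is no real obstacle: the only thing to be careful about is that the decomposition $x=E_\lambda x+(I-E_\lambda)x$ stays inside $D(T)$, and this is built into Theorem \ref{thmgoh}(3). Paranormality enters only through its use in Proposition \ref{nulrangeprop}; once that is available, the rest is a direct application of the spectral decomposition afforded by $E_\lambda$.
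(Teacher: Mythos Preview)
Your proof is correct and follows essentially the same route as the paper's. The paper also splits $x\in D(T)$ along $H=R(E_\lambda)\oplus N(E_\lambda)$, uses Proposition~\ref{nulrangeprop} to kill the $R(E_\lambda)$-component under $T-\lambda I$, and invokes $\lambda\notin\sigma(T|_{N(E_\lambda)})$ from Theorem~\ref{thmgoh} for the reverse inclusion; the only cosmetic differences are the order of the two inclusions and that the paper writes the decomposition as $x=u+v$ rather than $x=E_\lambda x+(I-E_\lambda)x$.
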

\begin{proof}
	By Theorem \ref{thmgoh}, $\lambda\notin\sigma(T|_{N(E_\lambda)})$, thus $R((T-\lambda I)|_{N(E_\lambda)})=N(E_\lambda)$ and $R((T-\lambda I)|_{N(E_\lambda)})\subseteq R(T-\lambda I)$, consequently $N(E_\lambda)\subseteq R(T-\lambda I)$.
	
	If $y\in R(T-\lambda I)$, then there exist $x\in D(T)$ such that $y=(T-\lambda I)x$. Since $H=R(E_\lambda)\oplus N(E_\lambda)$, so $x$ can be written as
	$$x=u+v,\text{ where }u\in R(E_\lambda),\,v\in N(E_\lambda).$$
	By Proposition \ref{nulrangeprop}, $u\in N(T-\lambda I)\subseteq D(T)$, then $v=x-u\in D(T)$ and by Theorem \ref{thmgoh} $N(E_\lambda)$ is invariant under $T$, we obtain
	$$y=(T-\lambda I)x=(T-\lambda I)v\in N(E_\lambda).$$
	Hence $R(T-\lambda I)\subseteq N(E_\lambda)$. This proves the result.
\end{proof}
	Next we give a characterization of closed range paranormal operators.
	\begin{lemma}\label{lemisopt}
		Suppose $T\in\mathcal{C}(H)$ is a densely defined paranormal operator. If $0$ is an isolated point of $\sigma(T)$, then $R(T)$ is closed.
	\end{lemma}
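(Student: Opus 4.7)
The plan is to identify $R(T)$ with the kernel of the Riesz projection $E_0$ and then invoke boundedness of $E_0$. Since $0$ is assumed to be an isolated point of $\sigma(T)$, the Riesz projection $E_0$ defined by Equation~\ref{eqn4} exists; by Theorem~\ref{thmgoh} it is a bounded projection on $H$, giving the topological direct sum $H = R(E_0) \oplus N(E_0)$.

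The most economical route is a direct appeal to Corollary~\ref{coroRE} with $\lambda = 0$, which already states $R(T) = N(E_0)$. Since $E_0 \in \mathcal{B}(H)$, its null space $N(E_0)$ is closed, and hence $R(T)$ is closed.

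If one prefers a self-contained argument that retraces the ingredients, I would proceed as follows. First, by Theorem~\ref{thmgoh}, $R(E_0) \subseteq D(T)$, both summands are $T$-invariant, $T|_{R(E_0)}$ is bounded with spectrum $\{0\}$, and $\sigma(T|_{N(E_0)}) = \sigma(T)\setminus\{0\}$, which excludes $0$. Lemma~\ref{invalem} makes $T|_{R(E_0)}$ paranormal, so by Theorem~\ref{thm6}(1) it is normaloid, whence $T|_{R(E_0)} = 0$; equivalently $R(E_0) \subseteq N(T)$. Consequently, for any $x \in D(T)$ decomposed as $x = u + v$ with $u \in R(E_0)$ and $v \in D(T) \cap N(E_0)$, we have $Tx = Tv \in N(E_0)$, giving $R(T) \subseteq N(E_0)$. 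The reverse inclusion follows from $0 \notin \sigma(T|_{N(E_0)})$, which makes $T|_{N(E_0)} : D(T)\cap N(E_0) \to N(E_0)$ surjective. Thus $R(T) = N(E_0)$, and closedness follows from boundedness of $E_0$.

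There is no serious obstacle here: the heavy lifting---identifying $R(E_0)$ with $N(T)$ for paranormal $T$ and matching $N(E_0)$ with $R(T)$---has already been carried out in Proposition~\ref{nulrangeprop} and Corollary~\ref{coroRE}. The only thing to emphasize is that we are using the \emph{boundedness} of the Riesz projection (rather than any spectral computation) to deduce closedness of the range, so the result is really a structural consequence of the isolation of $0$ combined with the paranormality on the spectral subspace $R(E_0)$.
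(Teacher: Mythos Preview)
Your proof is correct and follows essentially the same route as the paper: both invoke the Riesz projection $E_0$ and Corollary~\ref{coroRE} (with $\lambda=0$) to identify $R(T)$ with $N(E_0)$, which is closed because $E_0$ is bounded. Your optional self-contained paragraph simply unpacks the content of Proposition~\ref{nulrangeprop} and Corollary~\ref{coroRE}, so there is no substantive difference in approach.
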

	\begin{proof}
		Since $0$ is an isolated point of $\sigma(T)$, we can consider the Riesz projection $E_0$ with respect to $0$. By Theorem \ref{thmgoh} we get $0\notin\sigma(T|_{N(E_0)})$ and Corollary \ref{coroRE} says $R(T)=R(T|_{N(E_0)})$, which is closed. This proves the result.
	\end{proof}
	In general the converse of Lemma \ref{lemisopt} is not true. We have the following example to illustrate this.
	\begin{example}\label{eg1}
		Let $T:\ell^2(\mathbb N)\rightarrow \ell^2(\mathbb N)$ be defined by
		$$T(x_1,x_2,\ldots)=(0,x_1,x_2,\ldots),\text{ for all } (x_n)\in \ell^2(\mathbb{N}).$$
		Then $\sigma(T)=\{z\in\mathbb{C}:|z|\leq 1\}$, $R(T)=\ell^2(\mathbb N)\setminus\overline{span}\{e_1\}.$ Hence $R(T)$ is closed but $0$ is not an isolated point of $\sigma(T).$
	\end{example}
	 Next we will give a sufficient condition under which the converse of Lemma \ref{lemisopt} is also true.
	\begin{theorem}\label{thmisopt}
		Let $T\in\mathcal{C}(H)$ be a densely defined paranormal operator with $N(T)=N(T^*)$ and $0\in\sigma(T)$. Then $0$ is an isolated point of $\sigma(T)$ if and only if $R(T)$ is closed.
	\end{theorem}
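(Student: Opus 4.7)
The plan is to use Lemma \ref{lemisopt} for one direction (which does not use $N(T)=N(T^*)$) and to prove the converse by decomposing $T$ with respect to the orthogonal splitting
\[
H = N(T)\oplus R(T), \qquad R(T) = \overline{R(T)} = N(T^*)^{\perp} = N(T)^{\perp}.
\]
This splitting is available because $R(T)$ is closed and $N(T)=N(T^*)$. The summand $N(T)$ is trivially invariant under $T$; the summand $R(T)$ is invariant because, for $x\in D(T)\cap N(T)^{\perp}$ and $y\in N(T)=N(T^*)$, one has $\langle Tx,y\rangle = \langle x,T^{*}y\rangle = 0$, so $Tx\in N(T)^{\perp}=R(T)$.

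Set $T_{0} := T|_{R(T)}$ with domain $D(T_{0}) = D(T)\cap R(T) = C(T)$. By Lemma \ref{lemcarrier}, $T_{0}$ is densely defined on $R(T)$; closedness of $T_{0}$ follows from closedness of $T$ and of $R(T)$; paranormality follows from Lemma \ref{invalem}. Since $D(T_{0})\subseteq N(T)^{\perp}$, $T_{0}$ is injective, and $R(T_{0})=R(T)$ is closed, so Theorem \ref{thm4} supplies a bounded inverse $T_{0}^{-1}\in\mathcal{B}(R(T))$; equivalently $0\in\rho(T_{0})$. The decomposition gives $T = 0_{N(T)}\oplus T_{0}$ in the sense that $D(T)=N(T)\oplus D(T_{0})$ and $T(x_{0}+x_{1}) = T_{0}x_{1}$. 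Consequently, for any $\lambda\neq 0$, $T-\lambda I$ is invertible with bounded inverse iff $T_{0}-\lambda I$ is, whence $\sigma(T)\setminus\{0\} = \sigma(T_{0})$.

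It remains to show that $\sigma(T_{0})$ is separated from $0$. For $|\lambda|<1/\|T_{0}^{-1}\|$ one has $\|\lambda T_{0}^{-1}\|<1$, so $(I-\lambda T_{0}^{-1})^{-1}$ exists as a bounded operator on $R(T)$ via the Neumann series, and a direct verification shows that
\[
S_{\lambda} := (I-\lambda T_{0}^{-1})^{-1}T_{0}^{-1}
\]
is a two-sided bounded inverse of $T_{0}-\lambda I$: for $x\in D(T_{0})$, $S_{\lambda}(T_{0}-\lambda I)x = (I-\lambda T_{0}^{-1})^{-1}(I-\lambda T_{0}^{-1})x = x$, while for $y\in R(T)$ the vector $w := S_{\lambda}y$ satisfies the fixed-point identity $w = T_{0}^{-1}y + \lambda T_{0}^{-1}w$, which forces $w\in D(T_{0})$ and $(T_{0}-\lambda I)w = y$. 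Hence $B(0,1/\|T_{0}^{-1}\|)\cap\sigma(T_{0}) = \emptyset$, and $0$ is isolated in $\sigma(T)$. The main technical obstacle is precisely this last step—checking that $S_{\lambda}$ lands in $D(T_{0})$ when $T_{0}$ is possibly unbounded—after which everything else follows from the clean orthogonal-decomposition argument enabled by $N(T)=N(T^*)$ and Theorem \ref{thm4}. Notably, paranormality is essential for Lemma \ref{lemisopt} (the other direction) but enters the converse only through its role in defining the setting.
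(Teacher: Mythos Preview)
Your proof is correct and follows essentially the same route as the paper's: both restrict $T$ to $N(T)^{\perp}$, observe that this restriction $T_{0}$ is a bijection onto $N(T)^{\perp}=N(T^*)^{\perp}=R(T)$ with bounded inverse (so $0\in\rho(T_{0})$), and then use $\sigma(T)\subseteq\{0\}\cup\sigma(T_{0})$---the paper invokes \cite[Theorem~5.4, p.~289]{TAL} for this inclusion, whereas you verify it directly from the block decomposition $T-\lambda I=(-\lambda I_{N(T)})\oplus(T_{0}-\lambda I)$. Your final Neumann-series paragraph is superfluous: once you have $0\in\rho(T_{0})$, the resolvent set of a closed operator is open, so $\sigma(T_{0})$ is automatically bounded away from $0$.
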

	\begin{proof}
		Forward implication followed by Lemma \ref{lemisopt}.
		
		For the reverse implication, assume that $R(T)$ is closed. Consider $T_0=T|_{N(T)^{\perp}}:N(T)^{\perp}\cap D(T)\rightarrow N(T)^{\perp}.$ Clearly $T_0$ is injective and $R(T_0)=R(T)$ is closed. Also $R(T_0)=N((T^*)^{\perp})=N(T)^{\perp}$, consequently $T_0$ is bijective and $T_0^{-1}\in\mathcal{B}(N(T)^{\perp})$. Thus $0\notin\sigma(T_0)$. Applying \cite[Theorem 5.4, Page 289]{TAL}, $\sigma(T)\subseteq\{0\}\cup\sigma(T_0).$ Since $0\in\sigma(T)$, $\sigma(T)=\{0\}\cup\sigma(T_0)$, thus $0$ is an isolated point of $\sigma(T)$.
	\end{proof}
	Note that Theorem \ref{thmisopt} does not hold if we drop the condition $N(T)=N(T^*)$. Consider the operator $T$ defined in Example \ref{eg1}. Clearly $N(T)=\{0\}\ne\overline{span}\{e_1\}=N(T^*)$, also Theorem \ref{thmisopt} does not hold for $T$.

	\begin{theorem}\label{thmminmod}
		Let $T\in\mathcal{C}(H)$ be a densely defined paranormal operator. If $N(T)=N(T^*)$, then $m(T)=d(0,\sigma(T))$, the distance between $0$ and $\sigma(T).$
	\end{theorem}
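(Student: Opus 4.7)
The plan is to split the argument into the cases $0\notin\sigma(T)$ and $0\in\sigma(T)$, since in the latter case we must show both sides equal $0$, while in the former we must identify $m(T)$ with the distance via the normaloid property of $T^{-1}$.

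For the case $0\notin\sigma(T)$, I will invoke Proposition \ref{propinverse} to conclude that $T^{-1}$ is paranormal. Since $0\notin\sigma(T)$, $T^{-1}\in\mathcal{B}(H)$, so Theorem \ref{thm6} applies and yields $\|T^{-1}\|=r(T^{-1})$. The next task is a routine spectral-mapping identification: for closed invertible $T$ with bounded inverse one has $\sigma(T^{-1})\setminus\{0\}=\{\lambda^{-1}:\lambda\in\sigma(T)\}$ (and $0\in\sigma(T^{-1})$ precisely when $T$ is unbounded), so taking suprema gives $r(T^{-1})=1/d(0,\sigma(T))$. Combining with the standard identity $\|T^{-1}\|=1/m(T)$ (which follows from the bijection $x\leftrightarrow T^{-1}y$ between $S_{D(T)}$ scaled and $H$), I obtain $m(T)=1/\|T^{-1}\|=1/r(T^{-1})=d(0,\sigma(T))$.

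For the case $0\in\sigma(T)$ I need $m(T)=0$. If $N(T)\neq\{0\}$ this is immediate from the definition of $m(T)$. If instead $N(T)=\{0\}$, the hypothesis $N(T)=N(T^*)$ forces $N(T^*)=\{0\}$, hence $\overline{R(T)}=H$. Were $R(T)$ closed, $T$ would be a closed bijection from $D(T)$ onto $H$, so by the closed graph theorem $T^{-1}\in\mathcal{B}(H)$, contradicting $0\in\sigma(T)$. Therefore $R(T)$ is not closed, whence $\gamma(T)=0$ by Theorem \ref{thm4}. Since $N(T)=\{0\}$ gives $C(T)=D(T)$, we have $m(T)=\gamma(T)=0=d(0,\sigma(T))$.

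The main obstacle is handling the case $0\in\sigma(T)$ with $N(T)=\{0\}$; here the hypothesis $N(T)=N(T^*)$ is essential (as Example \ref{eg1} shows it cannot be dropped), and the argument requires pairing dense range with the closed-range-equivalences of Theorem \ref{thm4}. Everything else is a clean application of previously established paranormal facts (Proposition \ref{propinverse} and Theorem \ref{thm6}) together with the elementary spectral reciprocity between $T$ and $T^{-1}$.
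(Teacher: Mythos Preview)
Your proof is correct and shares the same core idea as the paper's: when $0\notin\sigma(T)$, use Proposition~\ref{propinverse} and Theorem~\ref{thm6} to conclude $T^{-1}$ is normaloid, then identify $m(T)=1/\|T^{-1}\|=1/r(T^{-1})=d(0,\sigma(T))$ via spectral reciprocity. The paper organizes the cases by injectivity of $T$ (and then by whether $\gamma(T)=0$), while you split on whether $0\in\sigma(T)$; the two decompositions cover the same ground.

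The one genuine difference worth noting is how the sub-case ``$T$ injective but $0\in\sigma(T)$'' is handled. The paper argues indirectly: if $R(T)$ were closed, Theorem~\ref{thmisopt} would make $0$ an isolated spectral point, and then Proposition~\ref{nulrangeprop} would force $N(T)=R(E_0)\neq\{0\}$, a contradiction. Your argument is more elementary and avoids the Riesz-projection machinery entirely: from $N(T)=N(T^*)=\{0\}$ you get dense range, so closed range would make $T$ bijective and $0\notin\sigma(T)$; hence $R(T)$ is not closed and $m(T)=\gamma(T)=0$ by Theorem~\ref{thm4}. This is a cleaner route for this sub-case, and it shows that paranormality is only needed in the invertible case (via the normaloid property of $T^{-1}$), not in establishing $m(T)=0$ when $0\in\sigma(T)$.
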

	\begin{proof}
		We will prove this result by considering the following two cases, which exhaust all the possiblities.\\
		Case (1): $T$ is not injective. Clearly $m(T)=0$ and $0\in\sigma_{p}(T)$. Hence $m(T)=0=d(0,\sigma(T)$.\\
		Case (2): $T$ is injective. It suffices to show that $\gamma(T)=d(0,\sigma(T))$ because $m(T)=\gamma(T)$.
		
		First assume that $\gamma(T)=0$. Then by Theorem \ref{thm4}, $R(T)$ is not closed, consequently $0\in\sigma_c(T)$. Thus $d(0,\sigma(T))=0=\gamma(T).$
		
		Next assume that $\gamma(T)>0$. By Theorem \ref{thm4}, we get $R(T)$ is closed. Note that $0\notin\sigma(T)$, because if $0\in\sigma(T)$, then by Theorem \ref{thmisopt} and Proposition \ref{nulrangeprop}, $0\in\sigma_p(T)$. But this is not true, as $T$ is injective. Thus $0\notin\sigma(T)$ and $T^{-1}$ is bounded paranormal operator, by Proposition \ref{propinverse}. Consequently $T^{-1}$ is normaloid, by Theorem \ref{thm6}. Hence by \cite[Proposition 2.12]{KUL1},
		\begin{align*}
		\gamma(T)=&\frac{1}{\|T^{-1}\|}\\
		=&\frac{1}{r(T^{-1})}\\
		=&\frac{1}{\sup\{|\lambda|:\lambda\in\sigma(T^{-1})\}}\\
		=&\inf\{|\delta|:\delta\in\sigma(T)\}\\
		=&d(0,\sigma(T)).
		\end{align*}
		This completes the proof.
	\end{proof}
As a consequence of Theorem \ref{thmminmod} we have the following result.
\begin{corollary}\label{cordist}
	Let $T\in\mathcal{C}(H)$ be a densely defined paranormal operator. If $N(T)=N(T^*)$, then $\gamma(T)=d(T):=\inf\{|\lambda|:\lambda\in\sigma(T)\setminus\{0\}\}.$
\end{corollary}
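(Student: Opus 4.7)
The plan is to reduce to the injective situation and then invoke Theorem \ref{thmminmod}. The hypothesis $N(T)=N(T^*)$ is exactly what forces $N(T)^\perp$ to be a reducing subspace for $T$: for $x\in D(T)\cap N(T)^\perp$ and $z\in N(T)=N(T^*)$, one has $\langle Tx,z\rangle=\langle x,T^*z\rangle=0$, so $T$ carries $D(T)\cap N(T)^\perp$ into $N(T)^\perp$; and since $N(T)\subseteq D(T)$, every $x\in D(T)$ splits orthogonally as $x=x_1+x_2$ with $x_1\in N(T)$ and $x_2\in D(T)\cap N(T)^\perp$. Thus $T_0:=T|_{D(T)\cap N(T)^\perp}$ is a well-defined closed operator on $N(T)^\perp$, and $T$ decomposes as the orthogonal sum of the zero operator on $N(T)$ with $T_0$.

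Next I would check that $T_0$ satisfies the hypotheses of Theorem \ref{thmminmod}. Paranormality of $T_0$ is Lemma \ref{invalem}; dense definition in $N(T)^\perp$ follows from Lemma \ref{lemcarrier} once one observes that $D(T_0)=C(T)$; injectivity is immediate. For $N(T_0^*)=\{0\}$, any $y\in N(T_0^*)$ annihilates $T_0D(T_0)$ and, via the domain splitting, annihilates $TD(T)$ as well, so $y\in N(T^*)\cap N(T)^\perp=N(T)\cap N(T)^\perp=\{0\}$. The identification $C(T)=D(T_0)$ together with $\|Tx\|=\|T_0x\|$ on $D(T_0)$ also delivers $\gamma(T)=m(T_0)$ immediately from the definitions.

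Applying Theorem \ref{thmminmod} to $T_0$ then yields $m(T_0)=d(0,\sigma(T_0))$. From the block-diagonal structure, $T-\lambda I$ is boundedly invertible if and only if $T_0-\lambda I$ is, whenever $\lambda\neq 0$, so $\sigma(T)\setminus\{0\}=\sigma(T_0)\setminus\{0\}$. Proposition \ref{nulrangeprop} now forbids $0$ from being an isolated point of $\sigma(T_0)$, since otherwise $0$ would be an eigenvalue of the injective operator $T_0$. Hence $0$ is either absent from $\sigma(T_0)$ or is an accumulation point of it, and in both cases $d(0,\sigma(T_0))=\inf\{|\lambda|:\lambda\in\sigma(T_0)\setminus\{0\}\}=d(T)$, yielding $\gamma(T)=d(T)$.

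The main obstacle I anticipate is the careful bookkeeping needed for the reducing decomposition of the unbounded operator $T$: verifying that $T_0$ is closed, densely defined, and satisfies $N(T_0^*)=\{0\}$ requires handling domain issues that are trivial in the bounded setting. Once the decomposition $T=0\oplus T_0$ is in hand, the rest is a direct translation between $T$ and $T_0$ combined with Theorem \ref{thmminmod} and Proposition \ref{nulrangeprop}.
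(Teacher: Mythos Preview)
Your proposal is correct and follows the same strategy as the paper: restrict to $T_0=T|_{N(T)^\perp}$, note $\gamma(T)=m(T_0)$, and apply Theorem~\ref{thmminmod} to $T_0$ to obtain $m(T_0)=d(0,\sigma(T_0))=d(T)$. The paper's proof is considerably more terse---it simply asserts $d(0,\sigma(T_0))=d(T)$ without justification---whereas you supply the missing details (the verification that $N(T_0^*)=\{0\}$, the spectral identity $\sigma(T)\setminus\{0\}=\sigma(T_0)\setminus\{0\}$, and the appeal to Proposition~\ref{nulrangeprop} to exclude $0$ as an isolated point of $\sigma(T_0)$).
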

\begin{proof}
	Consider the operator $T_0=T|_{N(T)^{\perp}}:C(T)\rightarrow N(T)^{\perp}$. By Lemma \ref{invalem} and Theorem \ref{thmminmod}, $T_0$ is paranormal and
	$$\gamma(T)=m(T_0)=d(0,\sigma(T_0))=d(T).$$
	This proves the result.
\end{proof}
	\begin{remark}
		Theorem \ref{thmminmod} does not hold if $N(T)\ne N(T^*)$. The following example illustrates this fact.
	\end{remark}
\begin{example}\label{eg3}
	Let $T:\ell^2(\mathbb N)\rightarrow \ell^2(\mathbb N)$ be defined by
	$$T(x_1,x_2,x_3,\ldots)=(0,x_1,2x_2,3x_3,\ldots)$$
	where $D(T)=\{(x_1,x_2,x_3,\ldots)\in \ell^2(\mathbb N):\sum_{i=1}^{\infty}\|ix_i\|^2<\infty\}$.
	
	As $C_{00}$, the space of all complex sequences consisting of atmost finitely many non zero terms is a subset of $D(T)$ and $C_{00}$ is dense in $\ell^2(\mathbb N)$,  $T$ is densely defined. It is easy to see that $T$ is a closed operator. Thus $T^*$ is well defined and
	\begin{equation*}
	T^*(x_1,x_2,x_3,\ldots)=(x_2,2x_3,3x_4,\ldots)
	\end{equation*}
with $D(T^*)=\{x\in \ell^2(\mathbb N):\sum_{i=2}^{\infty}\|(i-1)x_i\|^2<\infty\}.$
	
	For any $x=(x_n)\in D(T^2)$ we have,
	\begin{align*}
	\|T(x)\|^2=&\sum_{i=1}^{\infty}\|ix_i\|^2\\
	\leq&\sum_{i=1}^{\infty}(i+1)i\|x_i\|^2\\
	\leq&\sqrt{\sum_{i=1}^{\infty}\left((i+1)i\|x_i\|\right)^2}\sqrt{\sum_{i=1}^{\infty}\|x_i\|^2}\\
	=&\|T^2x\|\|x\|.
	\end{align*}
	Hence $T$ is paranormal.
	
	Since $\|Tx\|\geq\|x\|$ for all $x\in D(T)$ and $\|Te_1\|=\|e_1\|$, we get $m(T)=1$.
    Also it can be easily verified that $T$ is injective, $R(T)=\ell^2(\mathbb N)\setminus span\{e_1\}$ is closed but $R(T)\ne H$, so $0\in\sigma(T)$. Hence $d(0,\sigma(T))=0\ne 1=m(T).$

    Now we will show that $\sigma(T)=\mathbb{C}$. To prove this first we will show that $T-\lambda I$ is injective and $N(T-\lambda I)^*\neq\{0\},$ for all $\lambda\in\mathbb{C}$.

    Let $\lambda \in \mathbb C\setminus {\{0}\}$ and $(T-\lambda I)x=0$ for some $x=(x_n)\in D(T)$. Then
    $$(-\lambda x_1,x_1-\lambda x_2,2x_2-\lambda x_3,\ldots)=0.$$
    Equating component-wise we get $x=0$, thus $T-\lambda I$ is injective.

    Let $y=(y_n)\in D(T^*)$ be such that $(T-\lambda I)^*y=0$. This implies
    $$(y_2-\bar{\lambda}y_1,2y_3-\bar{\lambda}y_2,3y_4-\bar{\lambda}y_3,\ldots)=0.$$
    From this we get
    \begin{equation}\label{eqn7}
    y=\left(1,\bar{\lambda},\frac{\bar{\lambda}^2}{2!},\frac{\bar{\lambda}^3}{3!},\ldots\right)y_1.
    \end{equation}
    If $\lambda=0$, then $N(T^*)=\overline{span}\{e_1\}.$ If $\lambda\neq 0$, then we will show that $y$ obtained in Equation \ref{eqn7} belongs to $N(T-\lambda I)^*$. Consider
 $ z_{n}=\frac{\overline{\lambda}^{2n}}{(n!)^{2}}$, then
 \begin{equation*}
 \left|\frac{z_{n+1}}{z_n}\right|=\frac{|\lambda|^2}{(n+1)^2}\rightarrow 0 \text{ as }n\rightarrow\infty.
 \end{equation*}
 By the ratio test we conclude that $\sum_{n=1}^{\infty}z_n$ is absolutely convergent. That is $\sum_{i=0}^{\infty}\left(\frac{|\lambda|^n}{n!}\right)^2<\infty.$ Thus $y\in \ell^2(\mathbb N)$. On the similar lines we can show that $\sum_{i=1}^{\infty}\left(\frac{|\lambda|^n}{(n-1)!}\right)^2<\infty.$ Hence $N(T-\lambda I)^*\neq \{0\}$.

 For every $\lambda\in\mathbb{C}$, $N(T-\lambda I)=\{0\}$ and $\overline{R(T-\lambda I)}=(N(T-\lambda I)^*)^{\perp}\ne l_2(\mathbb{N})$. Thus we conclude that $\lambda\in\sigma_r(T)$, hence $\sigma(T)=\mathbb{C}$.

 We also have $\gamma(T)=1\ne 0=d(T)$. From this we can conclude that Corollary \ref{cordist} is also not true if the condition, $N(T)=N(T^*)$ is dropped.
\end{example}
\begin{remark}
It is well known that the residual spectrum of any closed densely defined normal operator is empty. But this is not true in the case of paranormal operators, as in Example \ref{eg3} the residual spectrum of $T$ is whole $\mathbb{C}$.
\end{remark}
\section{Weyl's theorem for paranormal operators}
In this section we show that a densely defined closed paranormal operator $T$ satisfy Weyl's theorem. We also prove that the Riesz projection  $E_\lambda$ with respect to any isolated spectral value $\lambda$ of $T$ is self-adjoint.

If $H=H_1\oplus H_2$ is a Hilbert space and $T\in\mathcal{C}(H)$, then $T$ has the block matrix representation
\begin{equation}\label{eqn8}
T=
\begin{bmatrix}
T_{11}&T_{12}\\
T_{21}&T_{22}
\end{bmatrix},
\end{equation}
where $T_{ij}:D(T)\cap H_j\rightarrow H_i$ is defined by $T_{ij}=P_{H_i}TP_{H_j}|_{D(T)\cap H_j}$ for $i,j=1,2$. Here $P_{H_i}$ is an orthogonal projection onto $H_i$.

 For $(x_1,x_2)\in (H_1\cap D(T))\oplus (H_2\cap D(T))$,  $$T(x_1,x_2)=(T_{11}x_1+T_{12}x_2,T_{21}x_1+T_{22}x_2).$$
 Note that if $T$ is densely defined then $T_{ij}$ is densely defined for $i,j=1,2$, that is $\overline{D(T_{ij}\cap H_j)}=H_j$ for all $i,j=1,2$.

\begin{remark}\label{rem2}
	Let $T$ be as defined in Equation \ref{eqn8}. If $H_1=N(T)\ne\{0\}$ and $H_2=N(T)^{\perp}$, then
	\begin{equation}\label{eqn9}
	T=
	\begin{bmatrix}
	0&T_{12}\\
	0&T_{22}
	\end{bmatrix}.
	\end{equation}
	\begin{enumerate}
		\item If $T$ is densely defined closed operator then $T_{22}$ is also densely defined closed operator.
		\item\label{2rem2} It can be easily checked that $R(T_{22})=R(T)\cap N(T)^{\perp}.$ If $R(T)$ is closed, then $R(T_{22})$ is closed.
		
	\end{enumerate}
\end{remark}

Any $T\in\mathcal{C}(H)$ is said to satisfy the Weyl's theorem if the Weyl's spectrum, $\omega(T)$ consists of all spectral values of $T$ except the isolated eigenvalues of finite multiplicity.  That is $\sigma(T)\setminus\omega(T)=\pi_{00}(T)$.

 In \cite{COB}, Coburn proved that any bounded hyponormal and Toeplitz operator satisfies the Weyl's theorem. This was extended by Uchiyama  \cite{UCH}  to bounded paranormal operators. Here we are going to prove this for unbounded paranormal operators.
\begin{theorem}\label{weylthm}
	Let $T\in\mathcal{C}(H)$ be a densely defined paranormal operator. Then $\sigma(T)\setminus\omega(T)=\pi_{00}(T)$.
\end{theorem}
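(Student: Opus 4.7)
The plan is to prove the two inclusions $\pi_{00}(T)\subseteq\sigma(T)\setminus\omega(T)$ and $\sigma(T)\setminus\omega(T)\subseteq\pi_{00}(T)$ by very different means; the first is immediate from the machinery of Section 3, while the second carries the real content.

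For $\pi_{00}(T)\subseteq\sigma(T)\setminus\omega(T)$, take $\lambda\in\pi_{00}(T)$. Isolation together with Proposition \ref{nulrangeprop} and Corollary \ref{coroRE} gives $R(E_\lambda)=N(T-\lambda I)$ and $R(T-\lambda I)=N(E_\lambda)$. The definition of $\pi_{00}(T)$ makes $R(E_\lambda)$ finite dimensional, so $N(E_\lambda)=R(T-\lambda I)$ is a closed subspace of finite codimension equal to $\dim N(T-\lambda I)$. Closedness of $R(T-\lambda I)$ gives $R(T-\lambda I)^{\perp}=N((T-\lambda I)^*)$, whose dimension equals that codimension. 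Thus $T-\lambda I$ is Fredholm with equal finite nullity and deficiency, i.e.\ of index zero, so $\lambda\notin\omega(T)$.

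For the reverse inclusion $\sigma(T)\setminus\omega(T)\subseteq\pi_{00}(T)$, fix $\lambda\in\sigma(T)\setminus\omega(T)$. Then $T-\lambda I$ is Fredholm of index zero but not invertible, so $1\le\dim N(T-\lambda I)<\infty$ and $\lambda\in\sigma_p(T)$; it remains to show $\lambda$ is isolated. Stability of the Fredholm index under scalar perturbation furnishes a disk $B(\lambda,r)$ on which $T-\mu I$ is Fredholm of index zero, so every $\mu\in\sigma(T)\cap B(\lambda,r)$ must be an eigenvalue. Arguing by contradiction, I would pick distinct $\mu_n\to\lambda$ in $\sigma(T)\cap B(\lambda,r)$ with unit eigenvectors $x_n$. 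If the $x_n$ are orthonormal, then $(T-\lambda I)x_n=(\mu_n-\lambda)x_n\to 0$ in norm while $x_n\to 0$ weakly. Decomposing $x_n=a_n+b_n$ with $a_n\in N(T-\lambda I)$ and $b_n\in N(T-\lambda I)^{\perp}\cap D(T)$, Theorem \ref{thm4} implies $T-\lambda I$ is bounded below on the carrier, forcing $b_n\to 0$ in norm; then $a_n=x_n-b_n\to 0$ weakly in a finite-dimensional space, hence in norm, contradicting $\|x_n\|=1$.

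The main obstacle is therefore an orthogonality lemma for eigenvectors of an unbounded paranormal operator at distinct eigenvalues, for which Ando's characterization, used in the bounded proof by Uchiyama, is not available. My approach is to work directly from the defining inequality. Suppose $Tu=\mu u$, $Tv=\nu v$, $\|u\|=\|v\|=1$, $\mu\neq\nu$, and set $\alpha=\langle v,u\rangle$. For every $c\in\mathbb{C}$ the vector $y=u+cv$ lies in $D(T^2)$ (because eigenvectors belong to $D(T^n)$ for all $n$), and paranormality yields
\[
g(c):=\|T^2 y\|^2\,\|y\|^2-\|Ty\|^4\ge 0,\qquad g(0)=0.
\]
Expanding in $c$ and $\bar c$, the linear terms work out to $2\,\mathrm{Re}\!\left[\bar\mu^{\,2}(\nu-\mu)^2\alpha\,c\right]$, and the nonnegativity of $g$ near $0$ forces the bracketed coefficient $\bar\mu^{\,2}(\nu-\mu)^2\alpha$ to vanish. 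Provided $\mu\neq 0$ this gives $\alpha=0$; the residual case $\mu=0\neq\nu$ is handled by interchanging the roles of $u$ and $v$. Once this lemma is in place the Fredholm-stability argument of the previous paragraph closes the forward inclusion and completes the theorem.
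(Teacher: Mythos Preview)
Your proof is correct, and for the inclusion $\pi_{00}(T)\subseteq\sigma(T)\setminus\omega(T)$ it is essentially the paper's argument (Riesz projection, Proposition~\ref{nulrangeprop} and Corollary~\ref{coroRE}, then count dimensions).

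For the harder inclusion $\sigma(T)\setminus\omega(T)\subseteq\pi_{00}(T)$ you take a genuinely different route. The paper decomposes $T-\lambda I$ as a $2\times 2$ block operator with respect to $H=N(T-\lambda I)\oplus N(T-\lambda I)^{\perp}$, peels off the finite-rank off-diagonal piece, and shows directly that the lower-right block $T_{22}-\lambda I$ is bijective on $N(T-\lambda I)^{\perp}$; the inclusion $\sigma(T)\subseteq\{\lambda\}\cup\sigma(T_{22})$ then isolates $\lambda$ in one stroke. Your approach instead invokes stability of the Fredholm index under small bounded perturbations to force every nearby spectral point to be an eigenvalue, and then derives a contradiction from an accumulation of such eigenvalues via the closed-range lower bound on the carrier together with finite-dimensionality of $N(T-\lambda I)$. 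The engine of your argument is the orthogonality lemma you prove from the paranormal inequality by expanding $g(c)=\|T^{2}(u+cv)\|^{2}\|u+cv\|^{2}-\|T(u+cv)\|^{4}$ to first order; your computation of the linear term $2\,\mathrm{Re}\bigl[\bar\mu^{\,2}(\nu-\mu)^{2}\langle v,u\rangle\,c\bigr]$ is correct, and the minimum at $c=0$ indeed kills it. This lemma is strictly stronger than what the paper obtains: the paper only proves orthogonality of eigenspaces at \emph{isolated} spectral values (its Corollary~4.4), and does so by first establishing self-adjointness of the Riesz projection; your argument gives orthogonality for arbitrary distinct eigenvalues directly from the defining inequality, with no Riesz projections and no appeal to Ando's characterization. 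The trade-off is that the paper's block-matrix argument is shorter and more self-contained, while yours produces an independently interesting auxiliary result.
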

\begin{proof}
	Let $\lambda\in\sigma(T)\setminus\omega(T)$. So, we have $\dim(N(T-\lambda I))=\dim(N(T-\lambda I)^*)<\infty$ and $R(T-\lambda I)$ is closed.
	
	Then $T-\lambda I$ can be decomposed on $H=N(T-\lambda I)\oplus N(T-\lambda I)^{\perp}$ as
	\[
	T-\lambda I=
	\begin{bmatrix}
		0 & T_{12}\\
		0 & T_{22}-\lambda I_{N(T-\lambda I)^{\perp}}
	\end{bmatrix},
	\]
	where $T_{22}=P_{N(T-\lambda I)^{\perp}}T|_{N(T-\lambda I)^{\perp}}$. By Remark \ref{rem2}, $T_{22}-\lambda I_{N(T-\lambda I)^{\perp}}$ is a densely defined closed operator with domain $C(T-\lambda I)$ and $R(T_{22}-\lambda I_{N(T-\lambda I)^{\perp}})$ is closed.
	
	As $N(T-\lambda I)$ is finite dimensional, $T_{12}$ is finite rank operator and $ind(T_{12})=0$, by Remark \ref{rem3}. Thus $ind(T-\lambda I)=ind\left(N(T_{22}-\lambda I_{N(T-\lambda I)^{\perp}})\right)=0$.
	
	Since $N(T_{22}-\lambda I_{N(T-\lambda I)^{\perp}})=\{0\}$, we get $N(T_{22}-\lambda I_{N(T-\lambda I)^{\perp}})^*=\{0\}$ and consequently $\overline{R(T_{22}-\lambda I_{N(T-\lambda I)^{\perp}})}=N(T-\lambda I)^{\perp}$. Thus $T_{22}-\lambda I_{N(T-\lambda I)^{\perp}}$ has bounded inverse and $\lambda\notin\sigma(T_{22})$. As $\sigma(T)\subseteq\{\lambda\}\cup\sigma(T_{22})$, $\lambda$ is an isolated point of $\sigma(T)$. Hence $\lambda\in \pi_{00}(T)$.
	
	Conversely, let $\lambda\in\pi_{00}(T)$. Now consider the Riesz projection $E_\lambda$ with respect to $\lambda$, as defined in Equation \ref{eqn4}. By Theorem \ref{thmgoh} and Corollary \ref{coroRE}, $\lambda\notin\sigma(T|_{N(E_\lambda)})$ and
	\begin{align*}
	R(T-\lambda I)=&R\left((T-\lambda I)|_{N(E_\lambda)}\right)\\
	=&N(E_\lambda).
	\end{align*}
	 As $\lambda\notin\sigma(T|_{N(E_\lambda)})$, this implies $R((T-\lambda I)|_{N(E_{\lambda}})$ is closed and so is $R(T-\lambda I)$. Also $((T-\lambda I)|_{N(E_\lambda)})^{-1}\in\mathcal{B}(N(E_\lambda))$, thus we get
	 \begin{align*}
	 \dim N(T-\lambda I)^*=&\dim (R(T-\lambda I)^{\perp})\\
	 =&\dim (N(E_\lambda)^{\perp})\\
	 =&\dim (R(E_\lambda))\\
	 =&\dim (N(T-\lambda I)).
	 \end{align*}
	 Hence $T-\lambda I$ is Fredholm operator of index zero. This proves our result.
\end{proof}
As a consequence of Theorem \ref{weylthm} and Proposition \ref{symm}, we have the following result.
\begin{corollary}
	Let $T\in\mathcal{C}(H)$ be a symmetric operator. Then $T$ satisfies the Weyl's theorem.
\end{corollary}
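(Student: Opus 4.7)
The plan is very short because this corollary is essentially a direct composition of two results already established in the paper. First I would observe that since $T$ is symmetric, it is densely defined by definition (denseness of the domain is required for the adjoint to exist, and symmetry $T \subseteq T^*$ presupposes $T^*$ exists). Combined with the hypothesis $T \in \mathcal{C}(H)$, this puts $T$ in the class of densely defined closed operators to which the earlier machinery applies.

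Next, I would invoke Proposition \ref{symm}, which shows that every closed symmetric operator satisfies the paranormal inequality $\|Tx\|^2 \le \|T^2 x\|\,\|x\|$ for all $x \in D(T^2)$. Hence $T$ is a densely defined closed paranormal operator. With that established, Theorem \ref{weylthm} directly yields the conclusion $\sigma(T) \setminus \omega(T) = \pi_{00}(T)$, which is precisely the statement that $T$ satisfies Weyl's theorem.

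There is essentially no obstacle to overcome; the only thing worth remarking is that symmetry is strong enough to supply both the denseness hypothesis and the paranormality hypothesis needed to feed into Theorem \ref{weylthm}, so no extra verification is required. A one-sentence proof of the form ``By Proposition \ref{symm}, $T$ is paranormal, and hence by Theorem \ref{weylthm}, $T$ satisfies Weyl's theorem'' suffices.
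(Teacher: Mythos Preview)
Your proposal is correct and matches the paper's approach exactly: the corollary is stated in the paper as an immediate consequence of Proposition~\ref{symm} (symmetric $\Rightarrow$ paranormal) and Theorem~\ref{weylthm} (densely defined closed paranormal operators satisfy Weyl's theorem), with no further argument given. Your additional remark that symmetry presupposes denseness of the domain is a harmless clarification.
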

\begin{theorem}\label{thmself}
	Let $T\in\mathcal{C}(H)$ be a densely defined paranormal operator and $\lambda$ be an isolated point of $\sigma(T)$. Then the Riesz projection $E_{\lambda}$ with respect to $\lambda$ satisfies
	\begin{equation*}
	R(E_{\lambda})=N(T-\lambda I)=N(T-\lambda I)^*.
	\end{equation*}
	Moreover $E_\lambda$ is self-adjoint.
\end{theorem}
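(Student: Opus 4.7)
Proposition \ref{nulrangeprop} already gives $R(E_\lambda)=N(T-\lambda I)$ and Corollary \ref{coroRE} gives $N(E_\lambda)=R(T-\lambda I)$, so $R(T-\lambda I)$ is in particular closed. Self-adjointness of the idempotent $E_\lambda$ is equivalent to the orthogonality $R(E_\lambda)\perp N(E_\lambda)$; and once that is in hand, the closed-range identity $N((T-\lambda I)^*)=R(T-\lambda I)^\perp$, together with $R(T-\lambda I)=N(E_\lambda)=R(E_\lambda)^\perp=N(T-\lambda I)^\perp$, yields $N((T-\lambda I)^*)=N(T-\lambda I)$ at once. The whole theorem therefore reduces to proving $R(E_\lambda)\perp N(E_\lambda)$.

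To prove the orthogonality, fix a unit vector $u\in R(E_\lambda)$, so $Tu=\lambda u$ and $u\in D(T^n)$ for all $n$ by Theorem \ref{thmgoh}(3), and take $v\in D(T^2)\cap N(E_\lambda)$. Then $z_t:=u+tv\in D(T^2)$ for every $t\in\mathbb C$. Squaring the paranormal inequality gives
\[
F(t,\bar t):=\|T^2 z_t\|^2\|z_t\|^2-\|Tz_t\|^4\ge 0,
\]
and a direct check using $Tu=\lambda u$ shows $F(0)=0$. So $F$ attains a global minimum at $t=0$ and $\partial F/\partial\bar t|_{t=0}=0$.

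For $\lambda\ne 0$, expanding this Wirtinger identity in terms of $a=\langle u,Tv\rangle$, $b=\langle u,T^2v\rangle$, $c=\langle u,v\rangle$ gives $\lambda^2 b+|\lambda|^4 c-2|\lambda|^2\lambda\,a=0$, which after division by $\lambda$ rearranges into $\langle u,(T-\lambda I)^2 v\rangle=0$. By Theorem \ref{thmgoh}(4), $\lambda\notin\sigma(T|_{N(E_\lambda)})$, so $(T-\lambda I)|_{N(E_\lambda)}$ has a bounded inverse and $(T-\lambda I)^2$ maps $D(T^2)\cap N(E_\lambda)$ onto $N(E_\lambda)$. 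Letting $v$ run over $D(T^2)\cap N(E_\lambda)$ then forces $u\perp N(E_\lambda)$.

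The case $\lambda=0$ is delicate, as the first-order Wirtinger identity collapses to $0=0$. My plan here is to switch to the dual parametrisation $z_s=su+v$ with $s\in\mathbb C$ free: since $Tu=0$, both $Tz_s=Tv$ and $T^2 z_s=T^2 v$ are independent of $s$, while $\|z_s\|^2$ is minimised at $s=-\bar c/\|u\|^2$ with value $\|v\|^2-|c|^2/\|u\|^2$. Substitution into the paranormal inequality yields the quantitative bound
\[
|\langle u,v\rangle|^2\le\|u\|^2\Bigl(\|v\|^2-\tfrac{\|Tv\|^4}{\|T^2v\|^2}\Bigr),
\]
which already forces $\langle u,v\rangle=0$ whenever $v$ achieves equality in the paranormal inequality. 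To promote this to every $v\in D(T^2)\cap N(E_0)$, I intend to exploit the bounded invertibility of $T|_{N(E_0)}$ from Theorem \ref{thmgoh}(4): each such $v$ can be written as $T^n w$ with $w\in D(T^{n+2})\cap N(E_0)$, and applying the bound to $T^n w$ together with the monotonicity $\|T^{k+1}w\|/\|T^k w\|\le\|T^{k+2}w\|/\|T^{k+1}w\|$ coming from iterated paranormality should let one squeeze the defect $\|v\|^2-\|Tv\|^4/\|T^2 v\|^2$ down to zero in the limit. Closing this iteration cleanly is the principal obstacle I expect to encounter.
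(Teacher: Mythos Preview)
Your reduction to proving $R(E_\lambda)\perp N(E_\lambda)$ is correct, and for $\lambda\ne 0$ your variational argument via the Wirtinger derivative is a genuinely different and elegant route: the paper never touches the paranormal inequality directly at this stage, but instead proves the purely algebraic fact $N(E_\lambda)\cap D(T)=N(T-\lambda I)^\perp\cap D(T)$ by an element chase using Proposition~\ref{nulrangeprop}, and then invokes the density of the carrier (Lemma~\ref{lemcarrier}) together with $N(E_\lambda)=R(T-\lambda I)=N((T-\lambda I)^*)^\perp$ to obtain $N((T-\lambda I)^*)\subseteq N(T-\lambda I)$, from which self-adjointness follows. That argument is uniform in $\lambda$ and needs no case distinction.

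The genuine gap in your proposal is the case $\lambda=0$. Your quantitative bound
\[
|\langle u,v\rangle|^2\le \|v\|^2-\frac{\|Tv\|^4}{\|T^2v\|^2}
\]
is correct, but the iteration you sketch does not close. Writing $v=T^n w$ with $w=(T|_{N(E_0)})^{-n}v$ simply reproduces the same inequality for the same fixed $v$; the right-hand side does not shrink. If instead you fix $w$ and let $n$ vary, you are bounding $|\langle u,T^n w\rangle|^2$ by $\|T^n w\|^2\bigl(1-r_n^2/r_{n+1}^2\bigr)$ with $r_k=\|T^{k+1}w\|/\|T^k w\|$: the monotonicity of $r_k$ forces $r_n/r_{n+1}\to 1$, but $\|T^n w\|$ typically grows, so the product need not tend to zero—and in any case this would control $\langle u,T^n w\rangle$, not $\langle u,v\rangle$. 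The same obstruction arises if you iterate with the bounded paranormal inverse $S=(T|_{N(E_0)})^{-1}$: you control $\langle u,S^n v\rangle$ rather than $\langle u,v\rangle$, and there is no reason for $\|S^n v\|^2(1-\rho_{n-1}^2/\rho_n^2)$ to vanish. I do not see a way to rescue this line for $\lambda=0$; the paper's projection-theoretic argument avoids the difficulty entirely because it never relies on the first-order term in the paranormal inequality being nondegenerate.
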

\begin{proof}
	By Theorem \ref{thmgoh} and Corollary \ref{coroRE}, $\lambda\notin N(E_\lambda)$ and $R(T-\lambda I)=N(E_\lambda)$. As $T|_{N(T-\lambda I)^{\perp}}$ is the bijection from $N(T-\lambda I)^{\perp}\cap D(T)$ to $R(T-\lambda I)$, thus we get $N(E_\lambda)\cap D(T)\subseteq N(T-\lambda I)^{\perp}\cap D(T)$.
	
	Now we claim that $N(E_\lambda)\cap D(T)=N(T-\lambda I)^{\perp}\cap D(T)$. Let $x\in N(T-\lambda I)^{\perp}\cap D(T)$ and
	$$E_\lambda x=u+v,\text{ where }u\in N(T-\lambda I),\,v\in N(T-\lambda I)^{\perp}.$$
	Operating $E_\lambda$ on both sides, we get
	$$u+v=E_\lambda x=u+E_\lambda v.$$
	This implies $E_\lambda v=v\in R(E_\lambda )\cap N(T-\lambda I)^{\perp}=\{0\}$, by Proposition \ref{nulrangeprop}. From this we conclude that $E_\lambda x=u=E_\lambda u$, that is $\,x-u\in N(E_\lambda)\cap D(T)\subseteq N(T-\lambda I)^{\perp}\cap D(T).$ As $x\in N(T-\lambda I)^{\perp}$, we get $u\in N(T-\lambda I)\cap N(T-\lambda I)^{\perp}=\{0\}$. Consequently $E_\lambda x=0$, thus $ N(T-\lambda I)^{\perp}\cap D(T)\subseteq N(E_\lambda)\cap D(T) $. Hence $N(T-\lambda I)^{\perp}\cap D(T)=N(E_\lambda )\cap D(T)$.
	
	By Lemma \ref{lemcarrier} and Corollary \ref{coroRE}, we get
	\begin{align*}
	N(T-\lambda I)^{\perp}=&\overline{N(T-\lambda I)^{\perp}\cap D(T)}\\
	=&\overline{R(T-\lambda I)\cap D(T)}\\
	=&\overline{(N(T-\lambda I)^*)^{\perp}\cap D(T)}\\
	\subseteq&(N(T-\lambda I)^*)^{\perp}.
	\end{align*}
	Hence $N(T-\lambda I)^*\subseteq N(T-\lambda I)$. By Corollary \ref{coroRE}, we have $N(E_\lambda)^{\perp}\subseteq R(E_\lambda)$.
	
	Let $x\in R(E_\lambda)$, then $x=a+b$ where $a\in N(E_\lambda)$ and $b\in N(E_\lambda )^{\perp}$. As $N(E_\lambda)^{\perp}\subseteq R(E_\lambda)$, we get $a=x-b\in N(E_\lambda)\cap R(E_\lambda)=\{0\}$. Thus we get $N(E_\lambda)^{\perp}=R(E_\lambda)$, which is equivalent to say that $N(T-\lambda I)=N(T-\lambda I)^*$.
	
	As $N(E_\lambda)^{\perp}=R(E_\lambda)$, $E_\lambda$ is an orthogonal projection. Hence $E_\lambda$ is self-adjoint.
\end{proof}
\begin{corollary}
	Let $T\in\mathcal{C}(H)$ be a densely defined paranormal operator. If $\lambda_1$ and $\lambda_2$ are two distinct isolated points of $\sigma(T)$, then $N(T-\lambda_1I)$ is orthogonal to $N(T-\lambda_2I)$.
\end{corollary}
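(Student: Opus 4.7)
The plan is to exploit Theorem \ref{thmself}, which identifies $N(T-\lambda I)$ with $N(T-\lambda I)^{*}=N(T^{*}-\bar{\lambda}I)$ at every isolated spectral value $\lambda$. This is precisely the input needed to run the classical eigenvector-orthogonality argument for normal-type operators.

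Concretely, I would fix $x\in N(T-\lambda_1 I)$ and $y\in N(T-\lambda_2 I)$. By Theorem \ref{thmself} applied at $\lambda_2$, we have $y\in N(T-\lambda_2 I)^{*}=N(T^{*}-\bar{\lambda}_2 I)$, so in particular $y\in D(T^{*})$ and $T^{*}y=\bar{\lambda}_2 y$. Since $x\in N(T-\lambda_1 I)\subseteq D(T)$, I can form $\langle Tx,y\rangle$ and compute it in two ways:
\begin{align*}
\lambda_1\langle x,y\rangle = \langle \lambda_1 x,y\rangle = \langle Tx,y\rangle = \langle x,T^{*}y\rangle = \langle x,\bar{\lambda}_2 y\rangle = \lambda_2\langle x,y\rangle.
\end{align*}
Therefore $(\lambda_1-\lambda_2)\langle x,y\rangle=0$, and since $\lambda_1\neq\lambda_2$ we conclude $\langle x,y\rangle=0$.

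There is no real obstacle here; the only nontrivial ingredient is the identification $N(T-\lambda_2 I)=N(T-\lambda_2 I)^{*}$, which has already been established in Theorem \ref{thmself} for isolated spectral values of a paranormal closed operator. The domain issues that usually complicate adjoint manipulations for unbounded operators disappear because the eigenvectors automatically lie in $D(T)$ and $D(T^{*})$ respectively.
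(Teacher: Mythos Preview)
Your proof is correct and follows essentially the same approach as the paper: use Theorem \ref{thmself} to pass $y$ into $N(T^{*}-\bar{\lambda}_2 I)$ and then compute $\langle Tx,y\rangle$ two ways. Your formulation is in fact marginally cleaner, since writing the conclusion as $(\lambda_1-\lambda_2)\langle x,y\rangle=0$ avoids the paper's preliminary ``without loss of generality $\lambda_1\neq 0$'' step.
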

\begin{proof}
	Without loss of generality, assume that $\lambda_1\ne 0$. For any $x\in N(T-\lambda_1I)$ and $y\in N(T-\lambda_2I)$, we have
	\begin{align*}
	\langle x,y\rangle=&{\lambda_1}^{-1}\langle\lambda_1x,y\rangle\\
	=&\lambda_1^{-1}\langle Tx,y\rangle.
	\end{align*}
	By Theorem \ref{thmself}, $N(T-\lambda_2I)=N(T-\lambda_2I)^*$. Thus we get
	\begin{align*}
	\langle x,y\rangle=&\lambda_1^{-1}\langle x, T^*y\rangle\\
	=&\lambda_1^{-1}\langle x,\overline{\lambda_2}y\rangle\\
	=&\lambda_1^{-1}\lambda_2\langle x,y\rangle.
	\end{align*}
	This implies either $\langle x,y\rangle=0$ or $\lambda_1^{-1}\lambda_2=1$.
	As $\lambda_1\ne\lambda_2$, we get $\langle x,y\rangle=0$. This proves the result.
\end{proof}

\addcontentsline{toc}{chapter}{References}


\begin{thebibliography}{99}
	\bibitem{ANDO} T. Ando, Operators with a norm condition, Acta Sci. Math. (Szeged) {\bf 33} (1972), 169--178. MR0320800
	\bibitem{BEN} A. Ben-Israel\ and\ T. N. E. Greville, {\it Generalized inverses}, second edition, CMS Books in Mathematics/Ouvrages de Math\'ematiques de la SMC, 15, Springer-Verlag, New York, 2003. MR1987382
	\bibitem{BAX} J. V. Baxley, On the Weyl spectrum of a Hilbert space operator, Proc. Amer. Math. Soc. {\bf 34} (1972), 447--452. MR0298444
	\bibitem{COB} L. A. Coburn, Weyl's theorem for nonnormal operators, Michigan Math. J. {\bf 13} (1966), 285--288. MR0201969
	\bibitem{DEN} A. Daniluk, On the closability of paranormal operators, J. Math. Anal. Appl. {\bf 376} (2011), no.~1, 342--348. MR2745411
	\bibitem{FUR} T. Furuta, On the class of paranormal operators, Proc. Japan Acad. {\bf 43} (1967), 594--598. MR0221302
	\bibitem{GOL} S. Goldberg, {\it Unbounded linear operators: Theory and applications}, McGraw-Hill Book Co., New York, 1966. MR0200692
	\bibitem{GOH} I. Gohberg, S. Goldberg\ and\ M. A. Kaashoek, {\it Classes of linear operators. Vol. I}, Operator Theory: Advances and Applications, 49, Birkh\"auser Verlag, Basel, 1990. MR1130394
	\bibitem{HAL} P. R. Halmos, {\it A Hilbert space problem book}, D. Van Nostrand Co., Inc., Princeton, NJ, 1967. MR0208368
	\bibitem{HEL} B. Helffer, {\it Spectral theory and its applications}, Cambridge Studies in Advanced Mathematics, 139, Cambridge University Press, Cambridge, 2013. MR3027462
	\bibitem{IST1} V. Istr$\mathrm{\breve{a}}$tescu, On some hyponormal operators, Pacific J. Math. {\bf 22} (1967), 413--417. MR0213893
	\bibitem{IST} V. Istr$\mathrm{\breve{\check{a}}}$tescu, T. Sait\^o\ and\ T. Yoshino, On a class of operators, T\^ohoku Math. J. (2) {\bf 18} (1966), 410--413. MR0209860
	\bibitem{KUB} C. S. Kubrusly, {\it Hilbert space operators}, Birkh\"auser Boston, Inc., Boston, MA, 2003. MR1997689
	\bibitem{KUL} S. H. Kulkarni\ and\ M. T. Nair, A characterization of closed range operators, Indian J. Pure Appl. Math. {\bf 31} (2000), no.~4, 353--361. MR1760936
	\bibitem{KUL1} S. H. Kulkarni, M. T. Nair\ and\ G. Ramesh, Some properties of unbounded operators with closed range, Proc. Indian Acad. Sci. Math. Sci. {\bf 118} (2008), no.~4, 613--625. MR2511129
	\bibitem{KUL2} S. H. Kulkarni\ and\ G. Ramesh, The carrier graph topology, Banach J. Math. Anal. {\bf 5} (2011), no.~1, 56--69. MR2738520
	\bibitem{SHKGR} Kulkarni, S. H \ and\ G. Ramesh, On the denseness of minimum attaining operators, Preprint 2017 (To appear in Operators and Matrices)
	\bibitem{LOR} E. R. Lorch, {\it Spectral theory}, University Texts in the Mathematical Sciences, Oxford University Press, New York, 1962. MR0136967
	\bibitem{REED} M. Reed\ and\ B. Simon, {\it Methods of modern mathematical physics. I}, second edition, Academic Press, Inc., New York, 1980. MR0751959
	\bibitem{RUD} W. Rudin, {\it Functional analysis}, McGraw-Hill Book Co., New York, 1973. MR0365062
	\bibitem{SCE} M. Schechter, {\it Principles of functional analysis}, Academic Press, New York, 1971. MR0445263
	\bibitem{TAL} A. E. Taylor\ and\ D. C. Lay, {\it Introduction to functional analysis}, reprint of the second edition, Robert E. Krieger Publishing Co., Inc., Melbourne, FL, 1986. MR0862116
    \bibitem{UCH} A. Uchiyama, On the isolated points of the spectrum of paranormal operators, Integral Equations Operator Theory {\bf 55} (2006), no.~1, 145--151. MR2226642
\end{thebibliography}
\end{document}